\newcommand{\bA}{\mathbf{A}}
\newcommand{\N}{\mathbb N}
\newcommand{\Z}{\mathbb Z}
\newcommand{\R}{{\mathbb R}}
\newcommand{\mS}{{\mathbb S}}
\newcommand{\cW}{{\mathcal W}}
\newcommand{\cI}{{\mathcal I}}
\newcommand{\cK}{{\mathcal K}}
\newcommand{\la}{\lambda}
\newcommand{\bnu}{\boldsymbol{\nu}}
\newcommand{\bxi}{\boldsymbol{\xi}}
\newcommand{\bla}{\boldsymbol{\lambda}}
\newcommand{\bX}{\mathbf{X}}
\newcommand{\hii}{h_{ii}}
\newcommand{\hjj}{h_{jj}}
\newcommand{\D}{\nabla}
\newcommand{\overDi}{\bar{\nabla_{i}}}
\newcommand{\overDj}{\bar{\nabla_{j}}}
\newcommand{\ra}{\rightarrow}
\newcommand{\pa}{\partial}
\newcommand{\on}{\overline{\nabla}}
\newcommand{\fs}{f_{\sigma}}
\newcommand{\na}{\nabla}
\newcommand{\tna}{\tilde{\na}}
\newcommand{\fr}{\frac}
\newcommand{\ep}{\epsilon}
\newcommand{\hk}{h_{kk}}
\newcommand{\hl}{h_{ll}}
\newcommand{\hi}{h_{ii}}
\newcommand{\hj}{h_{jj}}
\newcommand{\hone}{h_{11}}
\newcommand{\hn}{h_{nn}}
\newcommand{\fdi}{\dot{f}^{\,i}}
\newcommand{\fk}{\dot{f}^{\,k}}
\newcommand{\fone}{\dot{f}^{\,1}}
\newcommand{\fij}{\ddot{f}^{\,ij}}
\newcommand{\no}[1]{}
\newtheorem{theorem}{Theorem}[section]
\newtheorem{lemma}[theorem]{Lemma}
\newtheorem{definition}{Definition}[section]
\newtheorem{remark}[theorem]{Remark}
\newtheorem{corollary}[theorem]{Corollary}
\numberwithin{equation}{section}
\begin{document}


\title[Anisotropic flow of convex hypersurfaces]{Anisotropic flow of convex hypersurfaces by the square root of the scalar curvature}

\author{Hyunsuk Kang}

\author{Lami Kim}

\author{Ki-Ahm Lee}

\maketitle

\begin{abstract}
We show the existence of a smooth solution for the flow deformed by the square root of the scalar curvature multiplied by a positive anisotropic factor $\psi$ given a strictly convex initial hypersurface in Euclidean space suitably pinched. We also prove the convergence of rescaled surfaces to a smooth limit manifold 
which is a round sphere. In dimension two, it is shown that, with a volume preserving rescaling, the limit profile satisfies a soliton equation.
\end{abstract}


\section{Introduction}

The evolution of hypersurfaces in Euclidean spaces governed by curvature has been considered in many aspects in geometric analysis and mathematical physics.  In this paper, we consider a one parameter family of immersions $X(\cdot, t):\mS^n \rightarrow M_t \subset \R^{n+1}$, $M_t:=X(\mS^n,t)$,
and its evolution in time governed by the square root of scalar curvature on $M_t$ and a given smooth positive function $\psi$ in $\R^{n+1}$ with compact support: consider the following initial value problem
  \begin{align}\label{evolution_eq1}
    \frac{\partial X}{\partial t} &= - \psi(X(x,t)) R(x,t)^{1/2} \bnu, \qquad
    M_0 = X(\mS^n, 0) ,
  \end{align}
where $\bnu$ is the outward unit normal to $M_t$, and $M_0$ is a strictly convex smooth hypersurface in $\R^{n+1}$.  Here $\psi$, by which we call the anisotropic factor, can be considered as a nonhomogeneous influence on the curvature flow from the underlying manifold $\R^{n+1}$.  Note the dependence of $\psi$ on the position $X(\cdot, t)$, not on the normal vector $\boldsymbol{\nu}$ as considered in \cite{A2,A7}. Throughout the paper, we shall call the flow in \eqref{evolution_eq1} {\it anisotropic scalar curvature flow} in short.  The flow we concern in this paper is the generalisation of that considered by Chow in \cite{C2} where the speed of the flow is $R^{1/2}$, and recently the case in which the speed is $R^p$ for $p > 1/2$, was studied in \cite{AS}.     Our aim is to show the smooth convergence of the flow and find a condition on $\psi$ to have a spherical limit profile of the rescaled flow.

\subsection{Notation}

In a local coordinates system $\{x_{1},\cdots,x_{n}\}$, the induced metric and the second fundamental form are given by
 \begin{align*}
g_{ij}=\left<\frac{\partial X}{\partial x^i},\frac{\partial X}{\partial x^i}\right>
\quad \text{and } \quad  h_{ij}=-\left<  \frac{\partial^2 X}{\partial x^i\partial x^j},\boldsymbol{\nu}\right> ,
  \end{align*}
respectively, where $\left< \cdot, \cdot \right>$ is the standard inner product and $\bnu$ is the outward unit normal vector to $M$.  In terms of these, the Weingarten map $\cW$ is given by
$$
\cW=(h^{i}_{j})=(g^{ik}h_{kj}),
$$
with the eigenvalues $\lambda_{1},\cdots,\lambda_{n}$, and its inverse given by $\cW^{-1}=(h^{-1})^i_j=\overline{g}^{ik}(h)_{kj}$, where $\overline{g}$ is the standard round metric on the n-dimensional sphere $\mS^n$ and $\overline{\nabla}$ is the connection of $\overline{g}$ on $\mS^n$.
Let
$\sigma_{k}=\sum_{1\leq i_{1} < \cdots < i_{k}\leq n}\lambda_{i_{1}}\lambda_{i_{2}}\cdots\la_{i_{k}}$ be the $k$-th symmetric function of the curvature , and one can write the mean curvature $H=\text{trace}(h^i_j)=\sigma_{1}=\sum_{1\leq i\leq n}\lambda_i$, the Gauss curvature
$K=\det (h)=\sigma_{n}=\lambda_{1}\lambda_{2}\cdots\la_{n}$ and the scalar curvature $R=\sum_{ i_{1} \neq  i_{2}}\lambda_{i_{1}}\lambda_{i_{2}}$ on which we shall focus in this paper.

For a symmetric function $f$ on $\R^{n+1}$, denote by $\Gamma_2$ the connected component of the set
$\{\bla \in \R^n: f(\lambda)>0\}$ containing the positive cone $\Gamma_+$.
It follows from \cite{CNS} that $\Gamma_2$ is a cone with the property that for all $\bla \in\Gamma_2$,
\begin{equation*}
\begin{split}
\frac{\partial}{\partial\lambda_i}f(\lambda)^{1/2}>0,
\qquad
\frac{\partial^2}{\partial\lambda_i\partial\lambda_j}f(\lambda)^{1/2}\leq 0.
\end{split}
\end{equation*}
We consider the following parabolic flow which is expected to converge to a smooth hypersurface $M^*$:
  \begin{equation}\label{eq-main}
    X_t=-F(\cW,\boldsymbol{\nu})\boldsymbol{\nu}
  \end{equation}
where we take
$$
F(\cW,\boldsymbol{\nu})=\psi R^{1/2}(\boldsymbol{\nu},t) ,
$$
for the flow \eqref{evolution_eq1}.
This can be written in terms of the support function $S=S(z,t)=\langle z, X(t)\rangle$, $z \in \mS^n$, as
\begin{equation*}
S_t=- \psi \left(\frac{s_n}{ s_{n-2}}\right)^{1/2}=\Phi(\cW^{-1},z)
\end{equation*}
for $ \Phi (\cW^{-1},z)=-F(\cW,\boldsymbol{\nu}) $ for $z=\boldsymbol{\nu} \in \mS^n$.
For the anisotropic factor $\psi$, we denote the sup norm of its differentials by
$||D\psi||=\sup_{\bold{w} \in S^n, x \in \R^{n+1}} |D\psi(\bold{w})| (x)$ and $|| D^2 \psi || =\sup_{\bold{v} , \bold{w} \in S^n , x \in \R^{n+1}} |D^2 \psi ( \bold{v}, \bold{w})| (x)$, where $D$ is the gradient in $\R^{n+1}$.

Throughout the paper, $C$ denotes a positive constant depending only on the dimension $n$ and other fixed constants, and we write $c(a_1, \cdots, a_k)$ for a positive constant depending only on its arguments $a_1, \cdots, a_k$.

\subsection{History}
The well known example of evolution of hypersurfaces by curvature is the mean curvature flow (see \cite{B,CGG,CM,E1,EH,Hu1,Hu2} among many) for which excellent lecture notes \cite{E2,Sm,Wa,Z} are available, and others are
Gauss curvature flow (see, for example,  \cite{A1,A2,C1,CEI,DH,DL1,DL2,F,KLR}), the flows evolving with the speed of powers of mean curvature (see \cite{CRS,Sc1,Sc2}) and the flows by homogeneous functions of the principal curvatures \cite{A4,AM}. Most notably for our interest, for the flow deformed by powers of the scalar curvature in \cite{AS,C2}, they proved the short time existence and the long time existence as well as the convergence to a point, and also the convergence of the rescaled flow to a round sphere.
  The difference between the flow in \cite{C2} and \eqref{evolution_eq1} is the presence of the anisotropic factor $\psi$ and the limit profile is expected to satisfy a non-trivial limit equation.
  With a further assumption that the perturbation from $\psi$ is relatively small compared with the initial data, we show that the flow under the parabolic rescaling converges to a round sphere.
  Note that the flow \eqref{evolution_eq1} is somewhat related to the logarithmic Gauss curvature flow considered in \cite{CW} to solve the Minkowski problem where the evolution equation is given by
  \begin{align}\label{logarithmic_GCF}
    \frac{\partial X}{\partial t} &= -\log \frac{K(\bnu)}{f(\bnu)}\bnu,
    \\
    M_0 &= X(\mS^n, 0), \nonumber
  \end{align}
 where $K(\bnu)$ is the Gauss curvature of $M_t$ and $f$ is a positive smooth function on $\mS^n$.  Along this flow, the smoothness and the convexity of the hypersurfaces are preserved, and given that the weighted center of mass is equal to zero and starting from suitably chosen initial data , the limit profile of \eqref{logarithmic_GCF} satisfies
  \begin{align*}
  \log \frac{K(\bnu)}{f(\bnu)} &= 0 \, ,
  \end{align*}
  which is equivalent to have $K=f$ so that the given Borel measure of $\mS^n$ coincides with the area measure of the convex hypersurface.
  Likewise, consider the immersions of convex hypersurfaces with the evolution according to
    \begin{align}\label{evolution_eq2}
    \frac{\partial X}{\partial t} &= -\left(\fr{F}{f(\bnu,X)}-1 \right) \bnu \, ,
  \end{align}
  where $F$ is a function depending on the curvature of the hypersurface and $f$ is a function given {\it a priori}.
  Although it is not shown in this paper that under \eqref{evolution_eq1},  the limit hypersurface has its scalar curvature equal to a given smooth function on $\mS^n$, one may expect that depending on $f$, the flow \eqref{evolution_eq2} contracts to a point, expand to an asymptotic sphere or converges to a convex hypersurface with its limit profile satisfying $F=f$ under some conditions.

\subsection{Main Theorems}
We state the main results for the flow (\ref{evolution_eq1}).
\begin{theorem}\label{thm-main-1}
Let $M_0=X(S^n,0)$ be a compact, connected and strictly convex smooth manifold in $\R^{n+1}$.  Suppose that $h_{ij} \ge \epsilon (H+c)g_{ij}$ initially for some $\epsilon >0$ and $c>0$ satisfying
 \begin{align}\label{c_condition1}
   c &\ge \max \Big\{ \fr{5 n^2 ||D \psi||}{\ep^2 \psi} \, , \, \fr{3n ||D^2 \psi||^{1/2}}{\ep  \psi^{1/2}} + \fr{2n^3 ||D\psi||}{\psi} , 
   \fr{10}{\ep^6} \Big(  \fr{||D \psi||}{\psi} +  \fr{||D^2 \psi||}{\psi} \Big)  \Big\},
 \end{align}
where $D$ is the gradient in $\R^{n+1}$.  Then there exist a maximal time $T>0$ and a unique smooth solution $\{M_t=X(S^n,t)\}$ satisfying \eqref{evolution_eq1} for $t\in [0,T)$, and $M_t$ converges to a point $x_0=M^*$ as $t$ approaches $T$.
\end{theorem}
\begin{remark}
\item
\begin{enumerate}[(i)]
\item  From the pinching assumption at $t=0$, $c$ is related to initial data by $H \ge c n \ep$.  The condition \eqref{c_condition1} can be regarded as the balance between strict convexity and the perturbation of $\psi$ from a constant map.
\item  If the smallest positive principal curvature is large compared with the perturbation of $\psi$, the initial hypersurface satisfies  \eqref{c_condition1}.  Then a pinching estimate follows and the convexity of the hypersurfaces preserved. Otherwise $\psi$ dominates and the convexity of the hypersurfaces may not be preserved.
\item If  the smallest positive principal curvature is small,  then the perturbation of $\psi$ is required to be small for  \eqref{c_condition1}  to be satisfied.
\end{enumerate}
\end{remark}
In order to observe the behaviour of the solution near the maximal time $T$, we rescale the solution and the time parameter by
      \begin{align}\label{X_parametrisation}
       \tilde{X}(x, \tau) = \frac{X(x,t)-X(x,T)}{\sqrt{2(T-t)}},  
       \qquad 
          \tau = -\frac{1}{2}\log \Big( \frac{T-t}{T} \Big).
      \end{align}
      Then the rescaled equation of \eqref{eq-main} is
       \begin{align}\label{eq-s}
        \frac{\partial \tilde{X}}{\partial \tau} &= -\tilde{F} \tilde{\nu}(x,\tau) + \tilde{X}, \qquad \tilde{F}= \tilde{\psi} \tilde{R}^{1/2},
       \end{align}
on $\mS^n \times [0, \infty)$.
For the rescaled hypersurfaces $\tilde{M}_{\tau}$, we obtain the convergence to a smooth manifold:
\begin{theorem}\label{thm-main-2}
Under the hypotheses in Theorem \ref{thm-main-1}, the hypersurfaces
$\tilde{M}_{\tau}$ under the parabolic rescaling converge 
in the $C^{\infty}$-topology
 to a smooth manifold $\tilde{M}^*$ as $\tau$ approaches infinity. In addition, if $\psi$ has a strict local minimum at $x_0$, then $\tilde{M}^*$ is a round sphere $S^n$.
In dimension two, with a volume preserving rescaling, the limit hypersurface $\hat{M}^*$ satisfies the equation $\hat{S}^* = C \hat{\psi} (\hat{R}^*)^{1/2} $
 for some constant $C>0$, where $\hat{S}^{*}$ and $\hat{R}^*$ are the support function and the scalar curvature of $\hat{M}^*$, respectively.
\end{theorem}
\begin{remark}
 The convergence in $C^{\infty}$-topology here means the uniform convergence of the derivatives of rescaled second fundamental form of any order.  
The exponential decay rate of the convergence to a round sphere as in  \cite{Hu1} shall be dealt in the sequel to this paper. 
\end{remark}

\subsection{Outline}
The paper is organized as follows.
In  Section \ref{sec-evo}, we find the evolution equations of tensors related to curvature.
In Section \ref{sec-pinching}, a pinching estimate for the second fundamental form is shown. In general, the pinching estimate derived from a maximum principle to tensors plays a crucial role to prove the convergence of convex hypersurfaces.  In \cite{A6}, the second derivative pinching estimates for a class of nonlinear parabolic equations were shown when the function describing the speed satisfies some structural criteria.  For the flow \eqref{eq-main}, $F$ is the function composed of the anisotropic factor $\psi$ and second derivatives of $X$ which belongs to the space of concave functions. In order to control the trouble terms that appear in the modification of the maximum principle in \cite{A6}, an additional perturbation term is required in the pinching.
In Section \ref{higherA}, we give upper bounds for the higher derivatives of the second fundamental form in terms of initial data and the derivatives of $\psi$.
 In Section \ref{sec-curvature}, applying the pinching estimate, we obtain the uniform upper bound of curvature before the blow up if $M_t$ is smooth.  
In Section \ref{global_boundedness}, we show a global $L^p$ estimate of a scale invariant curvature quantity for large $p$ and consequently obtain a global $L^{\infty}$ estimate using the Moser iteration.  This also can be achieved from the De Giorgi method.   For the rescaled flow, we also acquire a uniform curvature bound. Finally, in Section \ref{sec-asymptotic}, the proofs of Theorem \ref{thm-main-1} and Theorem \ref{thm-main-2} are given: the existence of a smooth limit manifold $M^*$ and the convergence of the rescaled flow to a round sphere under the conditions in Theorem \ref{thm-main-2}.  Moreover, in dimension two, by rescaling the hypersurfaces homothetically so that the volume is preserved, it is shown that a soliton equation is realized by the limit profile.

\section{Evolution Equations}\label{sec-evo}

In this section, we obtain the evolution of the quantities related to the curvature of the hypersurface $M_t$.
Prior to the computation, we introduce the first and the second derivatives of the curvature $F$ with respect to $h_{ij}$ :
 \begin{align}\label{F_dots}
  \dot{F}^{ij} &:= \psi R^{-1/2}(H g^{ij} - h^{ij}),
  \\
  \ddot{F}^{ij,kl} &:= -\psi R^{-3/2}(H g^{ij} - h^{ij})(H g^{kl} - h^{kl})+\psi R^{-1/2}(g^{ij} g^{kl} -\delta^{ik} \delta^{jl}) . \nonumber
 \end{align}
\begin{lemma}\label{ev-h}
Under the parabolic flow (\ref{eq-main}), we have
 \begin{enumerate}
\item $\displaystyle \frac{\partial}{\partial t}g_{ij}=-2F h_{ij} $
\item $\displaystyle \frac{\partial}{\partial t}\nu =\D ^{j} F \frac{\partial X}{\partial x^j}$
\item $\displaystyle \frac{\partial}{\partial t} h_{ij}=\D_{i}\D_{j}F -F  h_{jl}h^{l}_{i} $
\item $\displaystyle \frac{\partial}{\partial t}H = 2F h^{ij}h_{ij} + g^{ij} \frac{\partial}{\partial t}h_{ij} $
\item $\displaystyle \frac{\partial}{\partial t}|A|^2 = 2(\nabla_i \nabla_j F)h^{ij}+2F (tr A^3) $
\item $\displaystyle \frac{\partial}{\partial t}F = \dot{F}^{ij} \nabla_i \nabla_j F + \psi^2 H|A|^2- \psi^2 (tr A^3) -\psi R ( D\psi \cdot \bnu ) $,
 \end{enumerate}
where $tr A^3 = h_{ij} h^{jk} h_k^i$, and $D$ is the gradient in $\R^{n+1}$.
\end{lemma}
\begin{remark}
Due to the term $-\psi R (D\psi \cdot \bnu)$, a lower bound for $F$ may not follow directly from Lemma \ref{ev-h} (6) in general.
\end{remark}
From Lemma \ref{ev-h}, the detailed evolutions of the second fundamental form and hence the mean curvature can be derived.
\begin{lemma}\label{ev-h1}
Under the anisotropic scalar curvature flow (\ref{eq-main}), we have
 \begin{align*}
  \frac{\partial}{\partial t}h_{ij}
  &=\dot{F}^{kl} \D_{k}\D_{l} h_{ij}
  -\frac{\psi R^{-\frac{1}{2}}}{H^2}(H\nabla_i h_{kl}-h_{kl}\nabla_i H)(H\nabla_j h_{kl}-h_{kl}\nabla_j H)
 \\
  &-\frac{1}{4} \psi H^4 R^{-\frac{3}{2}} \nabla_i \left(\frac{|A|^2}{H^2} \right) \nabla_j \left(\frac{|A|^2}{H^2} \right)
 \\
  &+R^{-\frac{1}{2}}\left[(H\nabla_j H - A \cdot \nabla_j A) \cdot \nabla_i \psi +(H\nabla_i H-A\cdot \nabla_i A)\cdot \nabla_j \psi \right]
 \\
  &+ R^{\frac12}\nabla_i \nabla_j \psi
   + \psi R^{-\frac{1}{2}}[(H|A|^2-tr A^3)h_{ij}-2R(A^2)_{ij}]
  \end{align*}
where $tr A^3 = h^{ij}h_{jk} h_i^k$.  Let $\Box:=(Hg^{ij}-h^{ij}) \nabla_i \nabla_j$.
  \begin{align*}
    \displaystyle \frac{\partial}{\partial t}H
    &= R^{-1/2}[ \psi \Box H - \frac{\psi}{H^2}|H\nabla h_{kl}-h_{kl}\nabla H|^2 -\frac{\psi}{4R}|H^2\nabla H + \frac{2R}{\psi}\nabla \psi|^2
    \\
    &+ \frac{2R}{H}\langle \nabla H, \nabla \psi \rangle ]+ \frac{R^{1/2}}{\psi}|\nabla \psi|^2+R^{1/2}\Delta \psi + \psi R^{-1/2}H[H|A|^2-tr A^3] ,
  \end{align*}
 \begin{align*}
  \displaystyle \frac{\partial}{\partial t}H^2
&=\dot{F}^{kl} \D_{k}\D_{l} H^2 - \frac{2\psi}{R^{1/2}}|\nabla H|^2_{Hg-h}
-\frac{2\psi}{HR^{1/2}}|H\nabla h_{kl}-h_{kl} \nabla H|^2 + \frac{2H^2\psi}{R^{1/2}}[H|A|^2-tr A^3]
\\
&-\frac{H\psi}{2R^{3/2}}|H^2\nabla \Big(\frac{|A|^2}{H^2}\Big)+\frac{2R}{\psi}\nabla \psi|^2
+4R^{1/2}\nabla_i H \cdot \nabla_i \psi +2HR^{1/2}\Delta \psi +\frac{2HR^{1/2}}{\psi}|\nabla \psi|^2 ,
 \end{align*}
 \begin{align*}
\displaystyle \frac{\partial}{\partial t}|A|^2
&= \dot{F}^{kl}\nabla_k \nabla_l |A|^2 -\frac{2\psi}{R^{1/2}}|\nabla h_{ij}|^2_{Hg-h} -\frac{2\psi}{H^2 R^{1/2}}|H\nabla h_{kl}-h_{kl}\nabla H|^2_h
-\frac{H^4 \psi}{2 R^{3/2}} \Big|\nabla \Big(\frac{|A|^2}{H^2} \Big) \Big|^2_h
\\
&+\frac{2\psi}{R^{1/2}}(H|A|^2-tr A^3)|A|^2 +\frac{4h^{ij}}{R^{1/2}} (H\nabla_j H - A \cdot \nabla_j A) \cdot \nabla_i \psi
+2R^{1/2} h^{ij} \nabla_i \nabla_j \psi ,
 \end{align*}
 \begin{align*}
\frac{\partial}{\partial t} \Big( \frac{|A|^2}{H^2} \Big) &= \psi R^{-1/2} \Box \Big( \frac{|A|^2}{H^2} \Big)
-\frac{2\psi R^{1/2}}{H^5}|H\nabla h_{kl} -h_{kl}\nabla H |^2
+ \frac{2\psi}{R^{1/2}H} \langle \nabla H, \nabla \Big(\frac{|A|^2}{H^2} \Big) \rangle_{Hg-h}
\\
&+ \frac{\psi H}{2R^{3/2}}|\nabla \Big(\frac{|A|^2}{H^2}\Big)+\frac{2R}{\psi H^2}\nabla \psi|^2_{|A|^2g-Hh} -\frac{2R^{1/2}}{\psi H^3}|\nabla \psi|^2_{|A|^2g-Hh}
\\
&-\frac{4R^{1/2}}{H^4} \langle \nabla H, \nabla \psi \rangle_{|A|^2g-Hh} - \frac{2R^{1/2}}{H^3}(|A|^2g^{ij}-H h^{ij})\nabla_i \nabla_j \psi.
 \end{align*}
\end{lemma}

\section{Pinching Estimate}\label{sec-pinching}

In this section, it shall be shown that if all the principal curvatures are of the same order at the initial time, it remains so until the maximal time.  This will be used in later sections to deduce the convergence to a point and find the limit of the rescaled solution.  
The pinching estimate in more general setting was obtained in the earlier work \cite{KK}, and for the convenience of the reader, we give the detail of the computation for our case.
Denote $E=R^{1/2}$ so that $F=\psi E$.  With the notation \eqref{F_dots}, we have
 \begin{align}\label{E_dots}
   \dot{F}^{ij} &= \psi \dot{E}^{ij}  ,
      \qquad
    \ddot{F}^{\,ij,\,kl} = \psi \ddot{E}^{\,ij,\,kl}.
 \end{align}
Define a $(2,0)$-tensor $W$ by
 \begin{eqnarray*}
 W_{ij} &=& h_{ij} - \epsilon ( H + c ) g_{ij},
 \end{eqnarray*}
 for some constant $c>0$ depending only on $n, \ep$, derivatives of $\psi$ and initial data to be specified later in this section.
A simple computation yields
 \begin{align}\label{W_ij_time_deriv}
  \begin{split}
  \frac{\partial g_{ij}}{ \partial t} &= -2\psi E h_{ij} 
  \\
  \frac{\partial h_{ij}}{ \partial t} 
&= \psi[\dot{E}^{\, kl} \nabla_k \nabla_l h_{ij} + \ddot{E}^{\, kl, \, pq} \nabla_i h_{kl} \nabla_j h_{pq} + h_{ij} \dot{E}^{\, kl} g^{pq} h_{kp} h_{lq} - 2E g^{kl} h_{ik} h_{jl}  ]
\\
&+ E \nabla_i \nabla_j \psi + \nabla_i E \nabla_j \psi + \nabla_i \psi \nabla_j E
\\
\frac{\partial W_{ij}}{ \partial t} &= \psi \dot{E}^{\, kl} \nabla_k \nabla_l W_{ij}
+ W_{ij} \psi \dot{E}^{kl} g^{pq} h_{kp} h_{lq} -2W_{jl} \psi E g^{kl} h_{ik}
+ N_{ij} +  M_{ij}
\end{split}
\end{align}
where
\begin{align*}
h_{ijk} &=\nabla_k h_{ij} \, ,
\\
N_{ij} &= \psi \ddot{E}^{\, kl, \, pq}[h_{ikl} h_{jpq}-\epsilon g_{ij} g^{rs}h_{rkl} h_{spq}]
        +c\epsilon \psi g_{ij}\dot{E}^{kl} g^{pq} h_{kp} h_{lq} \, ,
\\
M_{ij} &= E[\nabla_i \nabla_j \psi- \epsilon g_{ij} \Delta \psi]+\dot{E}^{\, kl}[ h_{ikl} \nabla_j \psi + h_{jkl} \nabla_i \psi  -2\epsilon g_{ij}g^{pq} h_{pkl} \nabla_q \psi].
\end{align*}
For a symmetric matrix $A$, we may write $E(A)=f(\lambda(A))$, where $\lambda(A)=(\lambda_1,\cdots,\lambda_n)$ is the map which takes $A$ to its eigenvalues $\lambda_j$.  Thus for $A=(h_{ij})$, one has
 \begin{align*}
   E(h_{ij}) &=f(\lambda_1,\cdots,\lambda_n)= R^{1/2}= \Big(\sum_{i\neq j}\lambda_i\lambda_j \Big)^{1/2}.
 \end{align*}
 \begin{definition}
 For a $C^2$ function $F$ defined on the cone $S_+$ of positive definite symmetric matrices, we say $F$ is inverse-concave if
 \begin{align*}
   F^*(A) &:= -f(\lambda_1^{-1},\cdots,\lambda_n^{-1}),
  \end{align*}
  is concave for any $A \in S_+$.
 \end{definition}
For a symmetric matrix $A$ with eigenvalues $\lambda_i$'s, $R^{1/2}(A)$, a symmetric homogeneous function of degree one, is concave and inverse concave since the ratio of symmetric functions $\sigma_{k+1} / \sigma_k$, $k=0,\cdots,n-1$, and their geometric means are concave and inverse-concave as shown in \cite{A6,Lib}.
Let $\dot{f}^k$ denote the derivative of $f(\lambda(A))$ with respect to $\lambda_k$, and $(\delta_{ij})$ be the diagonal matrix with $1$ in the entries.  From the definition of inverse concavity,
one can obtain the following lemma:
 \begin{lemma}[Corollary 5.4 \cite{A6}]\label{lem_fconcave}
Let $A$ be a symmetric $n\times n$ matrix and let $F=F(A)$ be a smooth function of $A$. Then
$F^*$ is concave at $A$ if and only if $\left( \frac{\dot{f}^k-\dot{f}^l}{\lambda_k-\lambda_l}+\frac{\dot{f}^k}{\lambda_l}+\frac{\dot{f}^l}{\lambda_k}\right) \ge 0$ for all $k\neq l$, and $ \left(\ddot{f}^{\,kl}+2\frac{\dot{f}^k}{\lambda_k}\delta_{kl}\right)\ge 0$.
\end{lemma}
Indeed this holds for $A=(h_{ij})$ and $f=R^{1/2}$ whose derivatives are
 \begin{align}
 \fdi &= R^{-1/2} (H-\hii) \, ,  \label{f_1st_deriv}
 \\
 \fij &= -R^{-3/2}(H-\hii)(H-\hjj) + \psi R^{-1/2}(1-\delta^{ij}) . \nonumber
 \end{align}
In order to apply the maximum principle for $W_{ij}$, one shall see in Lemma \ref{null_lemma} that a perturbation term $c\ep g_{ij}$ in $W_{ij}$ nullifies the effect of $\psi$.  Assuming this, one obtains the following pinching estimate adapting Theorem 3.2 in \cite{A6}.
  \begin{theorem}\label{thm-pinching}
  Let
 \begin{align}\label{curvature_pinching}
  W_{ij}:= h_{ij}-\epsilon (H+c)g_{ij},
 \end{align}
 for some constants $\epsilon>0$ and $c>0$ satisfying
 \begin{align}\label{c_condition}
   c &\ge \max \Big\{ \fr{5 n^2 ||D \psi ||}{\ep^2 \psi} \, , \, \fr{3n ||D^2 \psi||^{1/2}}{\ep  \psi^{1/2}} + \fr{2n^3 ||D\psi ||}{\psi} \Big \} ,
 \end{align}
where $D$ is the gradient in $\R^{n+1}$.
Then if $W_{ij}$ is non-negative everywhere in $M$ at time $t=0$, then it remains so on $M \times [0,T]$.
\end{theorem}

\begin{proof}
Suppose that $W_{ij}$ takes its minimum at $(p, t_0) \in M \times [0, T]$ in the direction say $v \in T_p M$ where local coordinates $\{x_1, \cdots, x_n \}$ are chosen to have $v=\fr{\partial}{\partial x^1}$ and the connection coefficients vanish at $(p, t_0)$.  Taking $p=0$ for convenience, one has
 \begin{align*}
  \min_{(x,t)} \, \min_{\bxi} W_{ij}\xi^i \xi^j &= W_{11}(0 , t_0) =0.
 \end{align*}
Then for any $n\times n$ matrix $B^{ij}=B^{ij}(x_1, \cdots, x_n)$ and $\bxi= \xi^i \fr{\partial}{\partial x^i}$ where $\xi^i=\delta^i_1 + B^{ij}x_j$,
 \begin{align*}
  W(x,t):= W_{ij} \xi^i \xi^j (x,t) &\ge 0 \quad \textrm{for $t\in [0,t_0]$},
 \end{align*}
satisfies
 $
 W(0, t_0) = W_{11}(0,t_0)=0.
 $
At $(0, t_0)$, since $\xi^i(0)=\delta^i_1$ and $\fr{\pa \xi^i}{\pa x^k}=B^{ik}$, one has
 \begin{align*}
  \fr{\pa W}{\pa x^k}  
  &= \fr{\pa W_{11}}{\pa x^k} + 2W_{i1} B^{ik} =0
  \\
  \fr{\pa^2 W}{\pa x^k \pa x^l} 
  &= \fr{\pa^2 W_{11}}{\pa x^k \pa x^l}+ 2\fr{\pa W_{j1}}{\pa x^k} B^{jl} + 2\fr{\pa W_{i1}}{\pa x^l}B^{ik} + 2W_{ij}B^{ik}B^{jl}
 \end{align*}
so that for $i>1$,
\begin{align*}
  \dot{E}^{kl} \nabla_k \nabla_l W 
 &= \dot{E}^{\,kl} \fr{\pa^2 W_{11}}{\pa x^k \pa x^l} -\fr{2\dot{f}^{\,k}}{h_{ii}-h_{11}} \delta_{kl} \delta_{ij} \fr{\pa W_{j1}}{\pa x^k} \fr{\pa W_{i1}}{\pa x^l}
 \\
 &+ 2(h_{ii}-h_{11}) \dot{f}^{\,k} \delta_{kl} \delta_{ij} \Big(B^{ik}+\fr{1}{h_{ii}-h_{11}} \fr{\pa W_{i1}}{\pa x^k} \Big) \Big( B^{jl}+\fr{1}{h_{jj}-h_{11}}  \fr{\pa W_{j1}}{\pa x^l} \Big) . 
  \end{align*}
By taking $B^{ik}=-\fr{1}{h_{ii}-h_{11}} \fr{\pa W_{i1}}{\pa x^k}$, one obtains
\begin{align*}
\dot{E}^{\,kl} \fr{\pa^2 W_{11}}{\pa x^k \pa x^l} (0, t_0) &\ge \fr{2\dot{f}^{\,k}}{h_{ii}-h_{11}} h_{1ik}^2 \, ,
\end{align*}
where $i>1$ since $\fr{\pa W_{i1}}{\pa x^k} =h_{1ik}-\ep h_{ppk} \delta_{1i}=h_{1ik}(1- \delta_{i1})$ at $(0,t_0)$.
From \eqref{W_ij_time_deriv}, at $(0,t_0)$,
 \begin{align}\label{W_time_deriv_RHS}
  \fr{\pa W}{\pa t} 
  &\ge \fr{2\psi \dot{f}^{\,k}}{h_{ii}-h_{11}} h_{1ik}^2 + N_{11} + M_{11} \, ,
 \end{align}
where
 \begin{align*}
N_{11} &= \psi \ddot{E}^{\, kl, \, pq}[h_{1kl} h_{1pq}-\epsilon h_{rkl} h_{rpq}]
        +c\epsilon \psi \dot{E}^{kl} g^{pq} h_{kp} h_{lq} \, ,
\\
M_{11} &= E[\nabla_1 \nabla_1 \psi- \epsilon \Delta \psi] + \dot{E}^{\, kl}[ h_{1kl} \nabla_1 \psi + h_{1kl} \nabla_1 \psi  -2\epsilon h_{pkl} \nabla_p \psi]\, .
\end{align*}
By applying a maximum principle, it suffices to show that the right hand side of \eqref{W_time_deriv_RHS} is non-negative.  The conclusion follows from Lemma \ref{null_lemma} below which provides the required null eigenvector condition.
 \end{proof}
 \begin{lemma}\label{null_lemma}
Suppose that $c$ is chosen to satisfy
 \begin{align}\label{c_condition2}
   c &\ge \sup \Big\{ \fr{5 n^2 ||D \psi ||}{\ep^2 \psi} \, , \, \fr{3n ||D^2 \psi||^{1/2}}{\ep  \psi^{1/2}} + \fr{2n^3 ||D\psi ||}{\psi} \Big \} ,
 \end{align}
and that local coordinates are taken as in Theorem \ref{thm-pinching}.  Then one has
 \begin{align}\label{null_lem}
 \fr{2\psi \dot{f}^{\,k}}{h_{ii}-h_{11}} h_{1ik}^2 + N_{11} + M_{11} &\ge 0 \quad \textrm{at $(0, t_0)$}.
\end{align}
 \end{lemma}
 \begin{proof}
It suffices to consider the case in which $A=(h_{ij})$ has distinct principal curvatures $h_{ii}$, $i=1,\cdots,n$, as one can take a sequence of perturbed $A$ with distinct eigenvalues converging to that with repeated eigenvalues.  The computation in this lemma is carried at the point $(0,t_0)$ of minimum of $W$ defined in Theorem \ref{thm-pinching}.
With respect to an orthonormal frame $\{e_1, \cdots, e_n\}$ of eigenvectors for $A$, choosing $v=e_1$, with the corresponding eigenvalues $\{ h_{11}, \cdots, h_{nn}  \}$ arranged in ascending order, one has $A=diag(h_{11}, \cdots, h_{nn} )$ and $h_{11}=\epsilon (H+c)$.  We denote $\psi_k$ for $\nabla_k \psi$ and $\psi_{ij}$ for $\nabla_i \nabla_j \psi$ for short.

Considering the first and the second derivatives of the smooth map $Z:Sym(n) \times \mathbb{R}^n \times O(n) \longrightarrow Sym(n)$ given by $Z(A, \lambda, M)=M^t A M - diag(\lambda)$, one can show that for a symmetric $n\times n$ matrix $B$,
 \begin{align*}
  \ddot{F}(B,B) &= \psi \sum_{k,l}  \ddot{f}^{\,kl}B_{kk} B_{ll} + 2 \psi \sum_{k<l} \frac{\dot{f}^k-\dot{f}^l}{\hk -\hl}B_{kl}^2,
 \end{align*}
where its proof can be found in Theorem 5.1 in \cite{A6}.  This implies that \eqref{null_lem} can be written as
 \begin{align*}
  Q &:= \sum_{k, l \ge 1}[ \ddot{F}(h_{1kl}, h_{1kl})-\epsilon \sum_{j \ge 1} \ddot{F}(h_{jkl}, h_{jkl}) ]
  + 2\psi \sum_{\substack{k \ge 1 \\ l>1}}  \frac{\dot{f}^k}{\hl - \hone} h_{1kl}^2
  + R^{1/2}[\psi_{11} -\ep \Delta \psi]
  \\
  &+ 2 \sum_{j \ge 1} \dot{f}^{\, j} [h_{1jj} \psi_1 - \epsilon \sum_{k \ge 1} h_{kjj} \psi_k] + c \epsilon \psi \sum_{k \ge 1} \dot{f}^{\, k} h_{kk}^2
  \\
  &= \psi \sum_{k,l\ge1} \ddot{f}^{\,kl}h_{1kk} h_{1ll}
  +2 \psi \sum_{k<l} \frac{\dot{f}^k-\dot{f}^l}{\hk -\hl} h_{1kl}^2
  -\epsilon \psi \sum_{j,k,l \ge 1} \ddot{f}^{\,kl}h_{jkk}h_{jll }
  \\
  &- 2 \ep \psi \sum_{\substack{j \ge 1 \\ 1 \le k<l}} \frac{\dot{f}^k-\dot{f}^l}{\hk-\hl} h_{jkl}^2
   + 2 \psi \sum_{\substack{k \ge 1 \\ l >1}} \frac{\dot{f}^k}{\hl- \hone}h_{1kl}^2
   + R^{1/2}[\psi_{11} -\ep \Delta \psi ]
   \\
   &+ 2 \sum_{j \ge 1}\dot{f}^{\, j} [h_{1jj} \psi_1  - \epsilon \sum_{i \ge 1} h_{ijj} \psi_i]
    +c \epsilon \psi \sum_{k \ge 1} \dot{f}^{\, k} h_{kk}^2 \, .
 \end{align*}
 Since $W_{11}(0, t_0)=0$, one has
 \begin{align}\label{linear_relation}
  h_{k11} &= \frac{\epsilon}{1-\epsilon}\sum_{j>1}h_{kjj}, \quad k \ge 1,
 \end{align}
 which shall be used frequently in the computation below.  The sum $Q$ of quadratic terms can be decomposed into $Q_k$, $k \ge 1$, ones with repeated indices, and $Q_{jkl}$, ones with distinct indices:
 \begin{align*}
  Q &= Q_0 + \sum_{1\le k \le n} Q_k + \sum_{1\le j<k<l\le n} Q_{jkl},
 \end{align*}
where
   \begin{align*}
   Q_0 &= R^{1/2}[\psi_{11}- \epsilon \Delta \psi] + c \epsilon \psi \sum_{1 \le k  < n} \dot{f}^{\, k} h_{kk}^2 + \fr{1}{2} c \epsilon \psi \dot{f}^{\, n} \hn^2 \, ,
   \\
   Q_1 &= (1-\ep)\psi \sum_{i,j \ge 1} \ddot{f}^{\,ij} h_{1ii} h_{1jj} + 2\psi \sum_{j>1}\frac{\dot{f}^j}{\hj-  \hone}h_{1jj}^2
   -2\ep \psi \sum_{j>1} \frac{\dot{f}^j-\dot{f}^1}{\hj -\hone}h_{1jj}^2
   \\
   &+ 2(1-\ep) \sum_{\substack{j\ge 1}} \psi_1 \dot{f}^{\, j} h_{1jj}
     + \fr{1}{2n} c \epsilon \psi \dot{f}^{\, n} \hn^2 \, ,
   \end{align*}
   \begin{align*}
    Q_k
    &=-\epsilon \psi \sum_{i,j \ge 1} \ddot{f}^{\,ij} h_{kii} h_{kjj} +  2 \psi \frac{\dot{f}^1}{\hk- \hone}h_{k11}^2 +2 \psi \frac{\dot{f}^k-\dot{f}^1}{\hk- \hone}h_{k11}^2
     \\
    &-2\epsilon \psi \sum_{\substack{j\ge 1 \\ j \neq k}} \frac{\dot{f}^j-\dot{f}^k}{\hj -\hk}h_{kjj}^2
     -2\ep \sum_{\substack{j\ge 1}} \psi_k \dot{f}^{\, j} h_{kjj} + \fr{1}{2n} c \epsilon \psi \dot{f}^{\, n} h_{nn}^2 \, , \quad \textrm{for $k >1$}
  \end{align*}
  \begin{align*}
  Q_{1kl}
    &=2 \Big[  (1-\epsilon) \frac{\dot{f}^k-\dot{f}^l}{\hk-\hl}+ \Big( \frac{\dot{f}^k}{\hl-  \hone}
   + \frac{\dot{f}^l}{\hk -  \hone} \Big)
    - \ep \Big( \frac{\dot{f}^k-\dot{f}^1}{\hk-\hone} + \frac{\dot{f}^l-\dot{f}^1}{\hl-\hone} \Big) \Big]  h_{1kl}^2 \,  ,
 \\
  Q_{jkl} &= -2\epsilon \Big[ \frac{\dot{f}^j-\dot{f}^k}{\hj-\hk} + \frac{\dot{f}^k-\dot{f}^l}{\hk-\hl} + \frac{\dot{f}^l-\dot{f}^j}{\hl-\hj}\Big] h_{jkl}^2 \, , \quad 1<j<k<l\le n.
 \end{align*}
We will show that each of these is non-negative with appropriate choice of $c$.
For the computation below, 
we use
the following estimates which can be easily obtained from the definition $\dot{f}^i$ and the pinching condition at $(0, t_0)$:
 \begin{align*}
     & \frac{\dot{f}^i-\dot{f}^k}{\hi -\hk} = -R^{-1/2} ,
     \\
     &
    h_{ii} \ge \ep(h_{jj} + c) \ge c \ep  \qquad \textrm{for $i, j=1,\cdots,n$} ,
      \\
     & \fr{\ep^{1/2}}{n} \le \fr{h_{n-1 \, n-1}}{n (\hn h_{n-1\, n-1})^{1/2}} \le \fdi \le \fr{ h_{nn}}{\hone} \le \ep^{-1} .
 \end{align*}
(i)  From Lemma \ref{lem_fconcave}, it follows that $Q_{1kl} \ge 0$.
 \\
(ii) From the concavity of $f$ and Lemma \ref{lem_fconcave}, one has $Q_{jkl} \ge 0$ for $1<j<k<l$.
\\
(iii) For a fixed $k>1$,
 \begin{align*}
    Q_k
    &\ge 2 \psi \frac{\dot{f}^k}{\hk}h_{k11}^2
        +2\epsilon \psi \sum_{\substack{j \ge 1 \\ j \neq k}} R^{-1/2}h_{kjj}^2
        -2\ep \sum_{\substack{j\ge 1}} \psi_k \dot{f}^{\, j} h_{kjj} + \fr{1}{2n} c \epsilon \psi \dot{f}^{\, n} h_{nn}^2 .
 \end{align*}
By Young's inequality and \eqref{linear_relation},
\begin{align*}
  \sum_{j \ge 1} \psi_k \dot{f}^{\, j} h_{kjj} 
  &\le \sum_{\substack{j\ge 1 \\ j \neq k}} \fr{\psi}{2R^{1/2}} h_{kjj}^2
  + \fr{2n |\psi_k|^2}{\psi} |\dot{f}^{\, j}|^2 R^{1/2}
  + \psi_k \dot{f}^{\, k} \big( \fr{1-\ep}{\ep}h_{k11} -\sum_{\substack{j>1 \\ j \neq k}} h_{kjj} \big)
  \\
  &\le  \sum_{\substack{j\ge 1 \\ j \neq k}} \fr{\psi}{2R^{1/2}} h_{kjj}^2
  + \fr{2n |\psi_k|^2}{\ep^2 \psi} R^{1/2}
  + \psi \fr{\dot{f}^k}{\ep \hk} h_{k11}^2 + \fr{ \hk |\psi_k|^2}{\ep \psi} \dot{f}^{\, k}
  \\
  &+ \sum_{\substack{j>1 \\ j \neq k}} \fr{\psi}{2R^{1/2}} h_{kjj}^2
   + \fr{2n |\psi_k|^2 }{\psi }|\dot{f}^{\, k}|^2  R^{1/2}
  \\
  &\le  \sum_{\substack{j \ge 1 \\ j \neq k}} \fr{\psi}{R^{1/2}} h_{kjj}^2
  + \psi \fr{\dot{f}^k}{\ep \hk} h_{k11}^2
  + \fr{|\psi_k|^2}{\ep^2 \psi} H
  + \fr{4n |\psi_k|^2}{\ep^2 \psi} R^{1/2} \, ,
  \end{align*}
which yields
 \begin{align*}
   Q_k &\ge -2 \fr{|\psi_k|^2}{\ep \psi} H
            - \fr{8n |\psi_k|^2}{\ep \psi} R^{1/2}
            + \fr{1}{2n} c \epsilon \psi \dot{f}^{\, n} h_{nn}^2
        \ge - \fr{10n |\psi_k|^2}{\ep \psi} H
            + \fr{1}{2n^4} c \epsilon^{3/2} \psi H^2 .
 \end{align*}
Therefore since $H \ge c n\ep$, $Q_k \ge 0$ if
 \begin{align}\label{c_condition_Q_k}
c &\ge \fr{5n^2 || D \psi ||}{\ep^{7/4} \psi} \, .
 \end{align}
\newline
(iv) Consider $Q_1$:
\begin{align*}
   Q_1 &= (1-\ep)\psi \sum_{i,j \ge 1} \ddot{f}^{\,ij} h_{1ii} h_{1jj}
        + 2\psi \sum_{j>1}\frac{\dot{f}^j}{\hj-  \hone}h_{1jj}^2
        - 2\ep \psi \sum_{j>1} \frac{\dot{f}^j-\dot{f}^1}{\hj -\hone}h_{1jj}^2
   \\
    &+ 2(1-\ep) \sum_{\substack{j\ge 1}} \psi_1 \dot{f}^{\, j} h_{1jj}
     + \fr{1}{2n} c \epsilon \psi \dot{f}^{\, n} h_{nn}^2 \, ,
   \end{align*}
Using \eqref{linear_relation}, the first sum above can be written as
\begin{align*}
(1-\epsilon)\sum_{k,l> 1} \Big(\ddot{f}^{\,kl}+\frac{\epsilon}{1-\epsilon}(\ddot{f}^{\,k1} + \ddot{f}^{\,l1})+\Big(\frac{\epsilon}{1-\epsilon}   \Big)^2 \ddot{f}^{\,11} \Big)h_{1kk} h_{1ll}.
\end{align*}
Let
 $   \phi(x_2,\cdots, x_n) := f(\frac{\epsilon}{1-\epsilon}(x_2+\cdots+x_n), x_2,\cdots, x_n) $ ,
and then its derivatives are
  \begin{align*}
   \dot{\phi}^k &= \dot{f}^k + \frac{\epsilon}{1-\epsilon}\dot{f}^1 ,
  \\
   \ddot{\phi}^{\,kl} &= \ddot{f}^{\,kl} + \frac{\epsilon}{1-\epsilon}(\dot{f}^{\,k1}+\dot{f}^{\,l1}) + \Big(\frac{\epsilon}{1-\epsilon}\Big)^2 \ddot{f}^{\,11}
  \end{align*}
for $k,l>1$.  The coefficients of the second and the third sum in $Q_1$ above are
   \begin{align*}
    2\psi \Big( \frac{\dot{f}^k}{\hk- \hone} - \epsilon  \frac{\dot{f}^k-\dot{f}^1}{\hk -\hone} \Big)
     &\ge \frac{2\psi}{\hk- \hone} [(1-\ep) \dot{f}^k + \ep \dot{f}^1]
      = 2(1-\ep) \psi \frac{\dot{\phi}^k}{\hk - \hone},
   \end{align*}
   so that
   \begin{align*}
    Q_1 &\ge (1-\epsilon) \psi \sum_{k,l>1} (\ddot{\phi}^{\,kl}+\frac{2\dot{\phi}^k}{\hk-\hone}\delta_{kl}) h_{1kk} h_{1ll}
    + 2(1-\ep) \sum_{\substack{j\ge 1}} \psi_1 \dot{f}^{\, j} h_{1jj}
     + \fr{c \epsilon }{2n} \psi \dot{f}^{\, n} h_{nn}^2
    \\
    &\ge 2 \psi \sum_{k>1} \big( \fr{1}{\hk-\hone} -\fr{1}{\hk} \big) \big[(1-\ep) \fk + \ep \fone \big] h_{1kk}^2
    + 2(1-\ep) \sum_{\substack{j\ge 1}} \psi_1 \dot{f}^{\, j} h_{1jj}
    \\
    &+ \fr{c \epsilon }{2n} \psi \dot{f}^{\, n} h_{nn}^2 \, ,
          \end{align*}
where the last inequality can be shown from the fact that the inverse-concavity of $\phi$ follows from that of $f$ and Lemma \ref{lem_fconcave} (ii).
By Young's inequality and \eqref{linear_relation} for the terms involving $\psi_1$,
 \begin{align*}
 2(1-\ep) \sum_{\substack{j\ge 1}} \psi_1 \dot{f}^{\, j} h_{1jj}
 &\ge - \fr{2\psi}{R^{1/2}} \sum_{k>1} \fr{\hone h_{1kk}^2}{\hk(\hk-\hone)} H_k 
 -\fr{2|\psi_1|^2}{R^{1/2} \psi} \sum_{k>1} \fr{\hk (\hk-\hone)}{\hone} H_k ,
\end{align*}
where $H_k = H-\hk + \ep(\hk-\hone)$, one has
 \begin{align*}
   Q_1 &\ge -\fr{2|\psi_1|^2}{R^{1/2} \psi} \sum_{k>1} \fr{\hk (\hk-\hone)}{\hone} H_k
           + \fr{ c \epsilon \psi }{2n R^{1/2}}(H-\hn)\hn^2
       \\
       &\ge -\fr{2n|\psi_1|^2}{R^{1/2} \psi} \fr{\hn^2}{\hone} H + \fr{ c \epsilon \psi }{2n R^{1/2}} \hone \hn^2  ,
 \end{align*}
which implies $Q_1 \ge 0$ if
 \begin{align}\label{c_condition_Q_1}
   c \ge  \fr{2n^{3/2} ||D \psi ||}{\ep^2 \psi} .
 \end{align}
 \newline
(v) The terms involving $D^2\psi$ in $Q_0$ can be bounded below as
 \begin{align*}
   R^{1/2}(\nabla_1 \nabla_1 \psi -\ep \Delta \psi) &\ge -R^{1/2} (|\nabla_1 \nabla_1 \psi| + \ep |\Delta \psi|)
   \ge -R^{1/2} \big[ 3||D^2 \psi || +  \ep H |D\psi| \big]
 \end{align*}
since
$
|\Delta_M \psi| \le (n+1)|| D^2 \psi || + H|D\psi| .
$
Thus one has
 \begin{align*}
   Q_0 \ge  -R^{1/2} \big[ 3||D^2 \psi || +  \ep H |D\psi| \big] + \fr{c\ep \psi}{2R^{1/2}} (H-\hn)\hn^2 \, ,
 \end{align*}
and hence $Q_0 \ge 0$ if
 \begin{align*}
   c \ge \fr{2R}{\ep \psi (H-\hn)\hn^2} \big[ 3||D^2 \psi|| + \ep H |D\psi| \big].
 \end{align*}
 The estimate $  \fr{R}{(H-\hn)\hn^2} \le  \fr{n^3}{H}$
 implies that $Q_0 \ge 0$ if
 \begin{align}\label{c_condition_2nd_deriv}
  c \ge \fr{3n ||D^2 \psi||^{1/2}}{\ep \psi^{1/2}} + \fr{2n^3 || D\psi ||}{\psi}.
 \end{align}
Finally, from the conditions \eqref{c_condition_Q_k}, \eqref{c_condition_Q_1} and \eqref{c_condition_2nd_deriv}, one can conclude that $Q$ is non-negative if $c$ satisfies the condition \eqref{c_condition2}.
\end{proof}
\begin{corollary}\label{eigenvalue_pinching}
Let $\hn$ and $\hone$ be the largest and the smallest nonzero eigenvalues of the Weingarten map respectively.  With the conditions in Theorem \ref{thm-pinching}, we have the following curvature pinching:
  \begin{align}\label{principal_curv_pinching}
  c\epsilon^2 &\le \epsilon \hone \le \hn \le \frac{1}{\epsilon} \hone .
  \end{align}
In particular, $ F \ge c\epsilon \inf_M \psi$.
\end{corollary}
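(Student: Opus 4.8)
The plan is to read off the corollary directly from the tensor inequality $W_{ij}\ge 0$ furnished by Theorem~\ref{thm-pinching}, which holds on all of $M\times[0,T]$ since it holds at $t=0$ by the hypothesis of Theorem~\ref{thm-main-1}. Fixing a point and a time and diagonalizing the Weingarten map in an orthonormal frame of eigenvectors, evaluating $W_{ij}\xi^i\xi^j\ge 0$ along each eigenvector yields the pointwise inequality
\begin{align}\label{ptwise-pinch-cor}
h_{ii}\ \ge\ \epsilon(H+c)\qquad\text{for every principal curvature }h_{ii}.
\end{align}
Tracing \eqref{ptwise-pinch-cor} over $i$ gives $H\ge n\epsilon(H+c)$, i.e.\ $(1-n\epsilon)H\ge n\epsilon c$; the same computation at $t=0$, where $H>0$ by strict convexity of $M_0$ and $c>0$, already forces $1-n\epsilon>0$, so in fact $H\ge \frac{n\epsilon c}{1-n\epsilon}>0$ at \emph{every} time. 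Consequently \eqref{ptwise-pinch-cor} gives $h_{ii}\ge\epsilon(H+c)\ge\epsilon c>0$ for all $i$: strict convexity is preserved along the flow, every eigenvalue of $\cW$ is positive, and ``largest/smallest nonzero eigenvalue'' just means $h_{nn}=\max_i h_{ii}$ and $h_{11}=\min_i h_{ii}$.

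With this in hand the pinching \eqref{principal_curv_pinching} is immediate. The bound $h_{11}\ge\epsilon c$ gives the leftmost inequality $c\epsilon^2\le\epsilon h_{11}$. Since $\epsilon<\tfrac{1}{n}\le 1$, one has $\epsilon h_{11}\le h_{11}\le h_{nn}$, the middle inequality. For the rightmost one, $H=\sum_k h_{kk}\ge h_{nn}$ because all summands are positive, so applying \eqref{ptwise-pinch-cor} in the smallest principal direction gives $h_{11}\ge\epsilon(H+c)\ge\epsilon H\ge\epsilon h_{nn}$, that is, $h_{nn}\le\epsilon^{-1}h_{11}$.

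For the last assertion I would bound the scalar curvature below: since each $\lambda_i\ge\epsilon c$ and $n\ge 2$, the sum $R=\sum_{i\neq j}\lambda_i\lambda_j$ consists of $n(n-1)\ge 2$ nonnegative terms each at least $(\epsilon c)^2$, so $R\ge n(n-1)(\epsilon c)^2\ge(\epsilon c)^2$ and hence $R^{1/2}\ge\epsilon c$. Therefore $F=\psi R^{1/2}\ge c\epsilon\,\psi\ge c\epsilon\inf_M\psi$ pointwise, hence on $M\times[0,T]$.

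There is no genuine analytic obstacle here: the entire difficulty was absorbed into Theorem~\ref{thm-pinching}. The only point that deserves a word is that strict convexity ($H>0$) need not be propagated as a separate invariant --- it drops out of the trace of the already-preserved pinching together with the structural constraint $\epsilon<1/n$ that the hypotheses of Theorem~\ref{thm-main-1} impose; granting that observation, the remainder is elementary manipulation of scalars.
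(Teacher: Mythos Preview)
Your proof is correct and is exactly the argument the paper has in mind; the corollary is stated without proof in the paper because it is an immediate consequence of the preserved inequality $W_{ij}\ge 0$, and your derivation of \eqref{principal_curv_pinching} from $h_{ii}\ge\epsilon(H+c)$ (including the observation that $\epsilon<1/n$ is forced by the hypothesis at $t=0$) matches the pointwise estimates the paper records and uses inside the proof of Lemma~\ref{null_lemma}. The lower bound $F\ge c\epsilon\inf_M\psi$ likewise follows from $R\ge n(n-1)(c\epsilon)^2$ as you write.
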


\section{Curvature Estimates}\label{sec-curvature}

In this section, we obtain the uniform upper bounds for curvatures of the evolving manifolds before the maximal time $T$. 
With the Gauss map $\boldsymbol{\nu}$, one can parametrise the hypersurface so that the support function $S$ is written as
$$
S(z,t)= \langle z, X(\boldsymbol{\nu}^{-1}(z),t) \rangle, \qquad \text{for $z \in \mS^n$ and $ t \in [0,T-\delta_0]$} ,
$$
 for some fixed $\delta_0$.
Along the flow (\ref{eq-main}),
$$
\frac{\partial S}{\partial t}=\left(z,  \frac{\partial \boldsymbol{\nu}^{-1}}{\partial t}\cdot \nabla X+ \frac{\partial X}{\partial t}\right)=-F.
$$
Let $\overline{g}$ be the standard metric on $S^n$ and $\overline{\nabla}$ be its connection.  Note that $\overline{g}$ is independent of $t$.
From the definition, one has
\begin{align}\label{overline_g}
\begin{split}
\overline{g}_{ij} &= h_{ik}h_{jl}g^{kl},  \qquad |A|^2 = g^{ij} \overline{g}_{ij} , 
\\
h_{ij} &= \overline{\nabla}_i \overline{\nabla}_j S + S\overline{g}_{ij} ,
\end{split}
\end{align}
and therefore,
\begin{align*}
\frac{\partial h_{ij}}{\partial t} &= \overline{\nabla}_i \overline{\nabla}_j \frac{\partial S}{\partial t} + \frac{\partial S}{\partial t} \overline{g}_{ij}.
\end{align*}
\begin{lemma}\label{lem_evol_F}
With the metric $\overline{g}_{ij}$, we have the following evolution equation for $F$:
\begin{align*}
\frac{\partial F}{\partial t} &=  \frac{\psi Hg^{ij}}{R^{\frac{1}{2}}} \overline{\nabla}_i \overline{\nabla}_j F + \psi^2 (H|A|^2-tr A^3) -\psi R (D\psi \cdot \bnu)  .
\end{align*}
\end{lemma}

\begin{proof}
Lemma \ref{ev-h} and \eqref{overline_g} give
\begin{align*}
H  
= \overline{g}_{kl} (h^{-1})^{kl}  ,
\qquad
\frac{\partial H}{\partial t}  
= g^{ij} (\overline{\nabla}_i \overline{\nabla}_j F + F \cdot \overline{g}_{ij}).
\end{align*}
From these and Lemma \ref{ev-h}, we can compute the evolution of $F=\psi R^{\frac12}$:
 \begin{align*}
  \frac{\partial F}{\partial t}
   &= \frac{\psi Hg^{ij}}{R^{\frac{1}{2}}} \overline{\nabla}_i \overline{\nabla}_j F + \psi^2 (H|A|^2-tr A^3) -\psi R (D\psi \cdot \bnu) .
\end{align*}
\end{proof}
Let $r_{in}$ and $r_{out}$ be the inner and the outer radii of $\Sigma_t$, respectively, and let $w(z):= S(z)+S(-z)$, $z \in S^n$, be the width on $z$-direction.  Then we have the maximum width $w_{\max}=w(z_+)$ and the minimum $w_{\min}=w(z_{-})$ for some $z_+$ and $z_-$ in $S^n$.
\begin{lemma}\label{lem-iso-w}
\item If the initial hypersurface is pinched as in Theorem \ref{thm-pinching}, we have
     \begin{enumerate}[(i)]
         \item
           $w_{\max} \le C w_{\min}$,
         \item
           $r_{out} \le \frac{w_{\max}}{\sqrt{2}}$ and $r_{in} \ge \frac{w_{\min}}{n+2}$,
         and in particular, $r_{out} \le C r_{in}$.
      \end{enumerate}
      for some positive constant  $C$.
\end{lemma}
\begin{proof}
  \begin{enumerate}[(i)]
    \item
  For the parametrization of a totally geodesic 2-sphere in $S^n$, choose a pair of angular variables $(\theta_+,\psi_+)$ mapped to
  $( \cos\theta_+\sin\psi_+ ,$ $\sin\theta_+\sin\psi_+ ,$ $\cos\psi_+ )$ such that $\psi_+=0$ is corresponding to $z_+$.  Expressing the second fundamental form $\Pi$ in terms of the support function $S$ in the $\psi_+$ direction, one has
    \begin{align*}
  w(z_+) &= S(z_+)+S(-z_+)=\int_{S^2} \Pi(\partial\psi_+,\partial\psi_+)d\mu_{S^2}.
    \end{align*}
See Theorem 5.1 in \cite{A4} for the detail.
Similarly, we also have $w(z_-)=S(z_-)+S(-z_-)=\int_{S^2} \Pi(\partial\psi_-,\partial\psi_-)d\mu_{S^2}$ for the corresponding parametrization $(\theta_-,\psi_-)$.  From Corollary \ref{eigenvalue_pinching}, we have $\Pi(\partial\psi_+,\partial\psi_+)\leq C \Pi (\partial\psi_-,\partial\psi_-)$ which implies $w_{\max}\leq Cw_{\min}$.
   \item
Consider the intersection of the hypersurface and its largest enclosed sphere.  Then the intersection has at most (n+2) elements.  For the detailed proof, see Lemma 5.4 in \cite{A4}.
   \end{enumerate}
   \end{proof}
    \begin{lemma}\label{H_upperbound}
Let $M_0$ be convex and let $\delta_0>0$ be a fixed constant.  Set $t_0=T-\delta_0$ and $\rho_0=\frac{1}{2}r_{in}(t_0)$.
Suppose that the pinching condition holds at $t=0$ with the condition \eqref{c_condition} on $c$ as in Theorem \ref{curvature_pinching}, and
$\ep \le \fr{5}{n}$.
Then there is a constant $C_F=C_F(t_0)>0$ such that
\begin{align}\label{F_upper}
   \sup_{x\in M ,\ 0 \le t \le t_0}F (x,t)
    \leq
    C_F = \max \left(\sup_{x\in M } F(x,0), \sup_{x\in M }\frac{C}{\rho_0}\right) \, ,
\end{align}
where $C$ is the constant given in Lemma \ref{lem-iso-w}.
\end{lemma}
\begin{proof}
Reparametrize the hypersurface to be defined on $S^n$.
Let $F_S=\frac{F}{S-\rho_0}$ where $\rho_0=\frac{1}{2}r_{in}(t_0)$ on $S^n \times [0,t_0]$, and suppose that $F_S$ takes its maximum at $(z_1,t_1)$ on $S^n \times [0,t_0]$.  Assume $t_1>0$ and unless stated otherwise, the computation below is carried out at $(z_1,t_1)$.
\begin{align}\label{1st_deriv_Fhat}
\begin{split}
\overline{\nabla}_i F_S &=  \frac{\overline{\nabla}_i F}{S-\rho_0} - \frac{ F\overline{\nabla}_i S}{(S-\rho_0)^2} = 0.
\\
\frac{\partial F_S}{\partial t} &= \frac{F_t}{S-\rho_0}-\frac{FS_t}{(S-\rho_0)^2} \ge 0 .
\end{split}
\end{align}
By Lemma \ref{lem_evol_F}, this implies that
\begin{align}\label{t-deriv_Fhat}
0 &\le (S-\rho_0)[Hg^{ij} \overline{\nabla}_i \overline{\nabla}_j F + F(H|A|^2-tr A^3)
- R^{3/2} (D\psi \cdot \bnu )]+ F R.
\end{align}
From (\ref{1st_deriv_Fhat}), one easily gets
  \begin{align*}
0 \ge \overline{\nabla}_i \overline{\nabla}_j F_S
&= \frac{\overline{\nabla}_i \overline{\nabla}_j F}{S-\rho_0}-\frac{F\overline{\nabla}_i \overline{\nabla}_j S}{(S-\rho_0)^2}, \nonumber
  \end{align*}
and using \eqref{overline_g}, one then has
  \begin{align}\label{Fhat_inequality1}
0 &\ge (S-\rho_0)[Hg^{ij} \overline{\nabla}_i \overline{\nabla}_j F+ HF|A|^2]+ \rho_0 FH |A|^2 - H^2F.
  \end{align}
Combining \eqref{t-deriv_Fhat} and \eqref{Fhat_inequality1},
  \begin{align}\label{Fhat_inequality2}
H^2 F+ F R &\ge (S-\rho_0) [ F \cdot tr A^3
+ R^{3/2} (D\psi \cdot \bnu )] + \rho_0 FH |A|^2.
  \end{align}
On the other hand, one also has $H^2 F+ F R  \le 2H^2 F$, and
for $\ep \le \fr{5}{n}$,
 \begin{align*}
 & (S-\rho_0) [ F \cdot tr A^3 +  R^{3/2} (D\psi \cdot \bnu )] + \rho_0 FH |A|^2  \ge \frac{\rho_0}{n}FH^3 ,
 \end{align*}
since one has $c \ep^2 \psi \ge 5n^2 |D\psi|$ from \eqref{c_condition} and
 \begin{align*}
    F \cdot tr A^3 +  R^{3/2} (D\psi \cdot \bnu ) &\ge \psi c\ep \hn^3 -  n^3 |D\psi| \hn^3
 \end{align*}
 from the pinching condition.
Thus for $\ep \le \fr{5}{n}$, \eqref{Fhat_inequality2} yields
$
H(z_1, t_1) \le \frac{2n}{\rho_0}.
$
If $F$ takes its maximum at $(z_2, t_1)$, take $t_0=t_1$, and then one has
\begin{align*}
F(z_2, t_1) &\le \frac{S(z_2, t_1)-\rho_0}{S(z_1,t_1)-\rho_0} F(z_1, t_1) \le  \frac{2 r_{out}(t_1)}{r_{in}(t_1)}  F(z_1, t_1) \le C \, F(z_1, t_1) .
\end{align*}
from Lemma \ref{lem-iso-w}.
Since
$
F(z_1, t_1) \le \psi(z_1) H(z_1,t_1) \le \frac{2n}{\rho_0} \psi_1 ,
$
one has
\begin{align*}
F(z_2, t_1) &\le C r_{in}(t_1)^{-1} .
\end{align*}
If $F$ takes its maximum at $(z_2, t_2)$ where $t_2<t_1$, then reduce $t_0$ so that $F_S$ takes its maximum at $t_1=t_2$, and follow the argument above.
   \end{proof}
\begin{lemma}\label{H_upper}
Let $M_0$ be convex, and suppose that the initial hypersurface is pinched as in Theorem \ref{thm-pinching}.
Then there exists a positive constant $C=C(n,\epsilon, \psi, C_F)$ such that
\begin{align*}
H \le C r_{out}(t_0)^{-1}
\end{align*}
\end{lemma}
\begin{proof}
From \eqref{F_upper}, we have
\begin{align*}
\hn \hone \le R \le C_F^2 \psi_0^{-2},
\end{align*}
where $\hn$ and $\hone$ are the largest and the smallest principal curvatures, respectively,
and the pinching estimate \eqref{principal_curv_pinching} implies
  \begin{align*}
 H^2 &\le n^2 \hn^2 \le n^2 \epsilon^{-1} \hn \hone
 \le  n^2 C_F^2 \epsilon^{-1} \psi_0^{-2}.
  \end{align*}
Then the result follows from Lemma \ref{H_upperbound}.
\end{proof}
To obtain the short time existence of the flow \eqref{eq-main}, using a radial function $r(z,t)$, $z \in S^n$, we parametrize the hypersurface by
  \begin{align*}
     X(x,t) &= r(z,t) z,
  \end{align*}
  for $x=\pi^{-1}(z) \in M$, where $\pi: M^n \rightarrow S^n$ is the normalizing map.
 \begin{lemma}
  If the initial hypersurface is pinched as in Theorem \ref{thm-pinching}, then one has the short time existence for the flow \eqref{evolution_eq1}.
  \end{lemma}
  \begin{proof}
      One can compute that
        \begin{align*}
           g^{ij} &= r^{-2} \Big( \overline{g}^{\, ij} - \frac{\overline{\nabla}^i r \overline{\nabla}^j r}{r^2+|\overline{\nabla}r|^2} \Big) \, , 
         \qquad
           \nu = \frac{1}{\sqrt{r^2+|\overline{\nabla} r|^2}} (rz - \overline{g}^{\, ij} \overline{\nabla}^{\, i} z \cdot \overline{\nabla}^{\, j} r)
         \\
           h_{ij} &= \frac{1}{\sqrt{r^2+ |\overline{\nabla}r|^2}}(-r\overline{\nabla}_i \overline{\nabla}_j r + 2\overline{\nabla}_i r \overline{\nabla}_j r + r^2 \overline{g}_{ij}) ,
         \end{align*}
       which can be found in Chapter 3 in \cite{Z}.
Then
  $
   \frac{\pa r}{\pa t} = -\frac{F}{r}\sqrt{r^2+|\overline{\nabla} r|^2},
  $
and since $F= \psi [(g^{ij}h_{ij})^2- g^{ik} g^{jl} h_{ij} h_{kl}]^{1/2}$, we have
  \begin{align}\label{r_evolution}
  \begin{split}
     \frac{\pa r}{\pa t} &= -\frac{\psi}{r^3} \Big[\{(\overline{g}^{ij} - \frac{\on^i r \on^j r}{r^2+|\on r|^2})(-r\on^i \on^j r + 2\on^i r \on^j r +r^2 \overline{g}^{ij}) \}^2
     \\
   &- (\overline{g}^{ik}-\frac{\on^i r \on^k r}{r^2+|\on r|^2})
      (-r\on^i \on^j r + 2\on^i r \on^j r +r^2 \overline{g}^{ij})
      \\
   &\times \, (\overline{g}^{jl}-\frac{\on^j r \on^l r}{r^2+|\on r|^2})
      (-r\on^k \on^l r + 2\on^k r \on^l r +r^2 \overline{g}^{kl}) \Big]^{1/2} 
      \end{split}
  \end{align}
For $M^n=\mS^n$, we have the following evolution equation of the radial function:
 \begin{align*}
  -\frac{\sqrt{n^2-1}\psi_1}{r} &\le \frac{d r}{d t} =  -\frac{\sqrt{n^2-1}\psi}{r}
  \le -\frac{\sqrt{n^2-1} \psi_0}{r}
 \end{align*}
so that
 \begin{align}\label{r_bound}
  \Big[r(0)^2-C^-(T-t) \Big]^{1/2} &\le r \le \Big[r(0)^2-C^+(T-t) \Big]^{1/2},
 \end{align}
where $C^-:=2(n^2-1)^{1/2}\psi_1$ and $C^+:=2(n^2-1)^{1/2}\psi_0$.
Regarding the hypersurface $X(\cdot,t)$ as a graph in \eqref{r_evolution}, the short time existence follows from the standard parabolic theory.
  \end{proof}
  With the parabolic rescaling \eqref{X_parametrisation}, one also has the following typical regularity of the rescaled flow
as such is given for the mean curvature flow in Lemma 7.2 of  \cite{A4} and in Lemma 3.5 of \cite{Z}:
  \begin{lemma}\label{lem-iso-esti}
    \item
      If the initial hypersurface is pinched as in Theorem \ref{thm-pinching}, we have
       \begin{enumerate}[(i)]
        \item
          $  \tilde{C}^{-1} \leq \tilde{r}_{in}(\tau)\leq \tilde{r}_{out}(\tau)\leq\tilde{C}\quad\text{for all $\tau>0$},$ and
        \item
          $  \sup_{(x,\tau) \in\Sigma_0 \times [0,\infty)}\tilde{H}(x,\tau)\leq \tilde{C}$.
    \end{enumerate}
\end{lemma}
\begin{proof}
  \begin{enumerate}[(i)]
    \item
    For $t\rq{}\in(t,T)$, the hypersurface $X(\cdot,t\rq{})$ is enclosed by $\partial B_{r_+(t)}(p)$ by the maximum principle for some $p \in \R^{n+1}$ where $r_+(t\rq{}) \le \left(r_{out}^2(t)-C^+(t\rq{}-t)\right)^{1/2}$ from \eqref{r_bound}.
 Thus we have $r_{out}(t) \ge (C^+(t\rq{}-t))^{1/2}$ and, letting  $t\rq{}$ tend to $T$, we have
 $r_{out}(t) \ge \left(C^+(T-t)\right)^{1/2}$.  With the rescaling, from Lemma \ref{lem-iso-w},
 \begin{align*}
 \frac{C^+}{2} &\le \tilde{r}_{out}(\tau)^2 \le C \tilde{r}_{in}(\tau)^2
 \end{align*}
 On the other hand, suppose that $X(\cdot, t)$ encloses the ball $B_{r_-(t)}(p)$ for some $p \in \R^{n+1}$ where
  $$
  r_{in}(t\rq{}) \ge r_-(t\rq{}) \ge \left(r_{in}^2(t)-C^-(t\rq{}-t)\right)^{1/2}.
  $$
 Recall that $X(\cdot, t)$ shrinks to a point as $t \ra T$, one has
 $
 r_{in}^2(t) \le C^-(T-t) ,
 $
 and hence $\tilde{r}_{in}^2(\tau) \le \frac{1}{2}C^-$.
    \item
   From Lemma \ref{H_upperbound} and Lemma \ref{H_upper}, one has
   $H \le C r_{in}(t_0)^{-1} \le C r_{out}(t_0)^{-1}$. Also the parametrization in \eqref{X_parametrisation} gives
    \begin{align*}
         r_{out}(t_0)^{-1} &= \frac{\tilde{r}_{out}(\tau_0)^{-1}}{\sqrt{2(T-t_0)}} \le \frac{C}{\sqrt{2(T-t_0)}}.
    \end{align*}
   Thus one has
 $
     \tilde{H}(x,\tau) = \sqrt{2(T-t)} H(x,t)  \le C.
      $
 \end{enumerate}
\end{proof}

\section{Higher regularity of $|A|^2$}\label{higherA}

In this section, we show that the rescaled second fundamental form $\tilde{A}$ and its derivatives are bounded depending only on the initial data and the derivatives of $\psi$. That is,
 \begin{align*}
   |\tna^m \tilde{\bA}| \le C(n, m, \ep, \tilde{H}_0, \psi, |D\psi|, \cdots, |D^{m+2} \psi|) \quad \textrm{for all $m \in \N$}\, .
 \end{align*}
Recall that $\dot{F} = \psi R^{-1/2}(Hg-h)$ and $R \sim O(H)$ from the pinching estimate \eqref{principal_curv_pinching}.  
Let $A \ast B$ denote any tensor field which is a linear combination of tensor field formed by contracting tensors $A$ and $B$ with the metric.
Since
   \begin{align*}
     \dot{F} &= \psi R^{-1/2}(Hg-h) = \psi R^{-1/2} \bA   ,
     \qquad
     \na \dot{F} 
      = \fr{1}{H} \big( \psi \na \bA + \na \psi \bA \big) \, ,
     \end{align*}
    one can show that
   \begin{align*}
     \na^k \dot{F} 
     &= \sum_{i+i_1+\cdots+i_{2k+1}=k} R^{-k-\fr{1}{2}} \na^{i} \psi \ast \na^{i_1} \bA \ast \cdots\ast \na^{i_{2k+1}} \bA 
   \end{align*}
   for $m \in \Z_+$ , using an induction on $m$. 
 Since $  \na^2 \psi = D^2 \psi - (\bnu \cdot D\psi) \bA$ where $D$ is the gradient in $\R$, and
 \begin{align*}
 \na(\dot{F}^{k \, l} \na_k \na_l \na^{m-1}\bA) &= \dot{F}^{k \, l} \na_k \na_l \na^m\bA + \na \bA \ast \ast \bA \na^{m-1} \bA + \bA \ast \bA  \ast \na^m\bA
\  \\
  &
  + \fr{1}{H} \sum_{i+j=1} \na^{i} \psi \ast \na^{j} \bA \ast \na^{m+1} \bA \,  ,
 \end{align*}
 one can easily compute that
\begin{align}\label{higher}
\begin{split}
    \fr{\pa}{\pa t} \nabla^m \bA
    &=
    \dot{F}^{p \, q} \nabla_p \nabla_q \nabla^m \bA
    + \sum_{i+j+k+l=m} \na^i \psi\ast \na^j \bA \ast \na^k \bA \ast \na^l \bA + \sum_{i+j=m} \na^{i+2} \psi \ast \na^j \bA
    \\
    &+ \sum_{k=1}^{m+1} \fr{1}{H^k} \sum_{i + i_1+\cdots +i_k=k} \na^i \psi \ast \na^{m-k+2} \bA \ast \na^{i_1} \bA \ast \cdots \ast \na^{i_k} \bA
 \end{split}
 \end{align}
and therefore, one can obtain
\begin{lemma}\label{A_m_deriv}
Given the pinching estimate \eqref{principal_curv_pinching}, one has
\begin{align*}
\begin{split}
   \fr{\pa}{\pa t} |\nabla^m \bA|^2
    &=
    \dot{F}^{k \, l} \nabla_k \nabla_l  |\nabla^m \bA|^2 -2\psi R^{-1/2} |\na^{m+1} \bA|^2_{Hg-h}
    \\
     &+ \sum_{k=1}^{m+1} \fr{1}{H^k} \sum_{i + i_1+\cdots +i_k=k} \na^i \psi \ast \na^{m-k+2} \bA \ast \na^{i_1} \bA \ast \cdots \ast \na^{i_k} \bA \ast \na^m \bA
    \\
    &+ \sum_{i+j+k+l=m} \na^i \psi\ast \na^j \bA \ast \na^k \bA \ast \na^l \bA  \ast \na^m \bA + \sum_{i+j=m} \na^{i+2} \psi \ast \na^j \bA \ast \na^m \bA \, .
 \end{split}
 \end{align*}
\end{lemma}
By rescaling the hypersurface with $\tilde{X}=\phi^{-1/2} X$ where $\phi(t)=2(T-t)$, as in \eqref{X_parametrisation},
one has
\begin{lemma}\label{highA}
Given the pinching estimate \eqref{principal_curv_pinching}, one has
 \begin{align*}
   |\tna^m \tilde{\bA}| \le C(n, m, \ep, \tilde{H}_0, \psi, |D\psi|, \cdots, |D^{m+2} \psi|) \quad \textrm{for all $m \in \N$}\, .
 \end{align*}
\end{lemma}
\begin{proof}
Since $|\tilde{\bA}|^2$ is bounded, we use an induction on $m$ so that we assume
$$
|\tilde{\na}^k \tilde{\bA}| \le C(n,m,\ep, \psi, |D\psi|, \cdots, |D^{m+1}\psi|) \quad \textrm{for $k \le m-1$} \, .
$$
Note that one has
 \begin{align*}
     \fr{\pa}{\pa \tau} |\tilde{\nabla}^m \tilde{\bA}|^2
     &=
     2 \langle \tilde{\na}^m \bA, \fr{\pa}{\pa \tau} \tilde{\nabla}^m \tilde{\bA} \rangle + 2\psi \tilde{R}^{1/2} | \tilde{\nabla}^m \tilde{\bA} |^2_{\tilde{h}} \, ,
     \\
      \tilde{\dot{F}}^{k \, l} \tna_k \tna_l |\tna^m \tilde{\bA}|^2 &=  2 \langle \tilde{\dot{F}}^{k \, l} \tna_k \tna_l \tna^m \tilde{\bA} , \tna^m \tilde{\bA} \rangle + 2 \psi \tilde{R}^{-1/2} |\tna^{m+1} \tilde{\bA}|^2_{\tilde{H} -\tilde{g} \tilde{h}} \,  .
 \end{align*}
Using these and Lemma \ref{A_m_deriv}, one can compute
 \begin{equation*}
    \begin{split}
    \fr{\pa}{\pa \tau} |\tilde{\nabla}^m \tilde{\bA}|^2
    &\le
    \tilde{\dot{F}}^{k \, l} \tilde{\nabla}_k \tilde{\nabla}_l  |\tilde{\nabla}^m \tilde{\bA}|^2 -2\psi \tilde{R}^{-1/2} |\tilde{\na}^{m+1} \tilde{\bA}|^2_{\tilde{H} \tilde{g}-\tilde{h}} + C_1(n,\ep, \psi, |D \psi|) |\tilde{\na}^{m+1} \tilde{\bA}|^2
    \\
     &+ C_2(n, m, \ep) \sum_{j=2}^m \fr{1}{\tilde{H}^j} \sum_{i_1+\cdots +i_j+k=j} |\tilde{\na}^k \psi | \,  | \tilde{\na}^{m-j+2} \tilde{\bA} |
 \, | \tilde{\na}^{i_1} \tilde{\bA} | \cdots | \tilde{\na}^{i_j} \tilde{\bA}| \,  | \tilde{\na}^m \tilde{\bA} |
     \\
    &+ C_3(n, \ep)\sum_{i+j+k+l=m} |\tilde{\na}^i \psi| \, |\tilde{\na}^j \tilde{\bA}| \, |\tilde{\na}^k \tilde{\bA}| \, |\tilde{\na}^l \tilde{\bA}|  \, |\tilde{\na}^m \tilde{\bA}|
\\
&+ C_4(n,\ep)\sum_{i+j=m} |\tilde{\na}^{i+2} \psi| \, |\tilde{\na}^j \tilde{\bA}| \, |\tilde{\na}^m \tilde{\bA}| \, .
    \end{split}
   \end{equation*}
Using Young's inequality, this yields
  \begin{align*}
     \fr{\pa}{\pa \tau} |\tilde{\nabla}^m \tilde{\bA}|^2
    &\le
    \tilde{\dot{F}}^{k \, l} \tilde{\nabla}_k \tilde{\nabla}_l  |\tilde{\nabla}^m \tilde{\bA}|^2 -\psi_{0} \fr{\ep^{1/2}}{n} |\tilde{\na}^{m+1} \tilde{\bA}|^2
    +C_m (|\tilde{\na}^m \tilde{\bA} |^2+1) \, ,
  \end{align*}
where $C_m =C(n,m,\ep, \tilde{H}_0, \psi, \cdots, |D^{m+2} \psi| )$ is a positive constant.
Consider
\begin{align}\label{A_m_deriv_short}
 \begin{split}
   \fr{\pa}{\pa \tau} [ |\tilde{\nabla}^m \tilde{\bA}|^2 + \tilde{B} | \tna^{m-1} \bA |^2 ]
    &\le
    \tilde{\dot{F}}^{k \, l} \tilde{\nabla}_k \tilde{\nabla}_l  [ |\tilde{\nabla}^m \tilde{\bA}|^2 + \tilde{B} |\tilde{\nabla}^{m-1} \tilde{\bA}|^2 ] +\big(C_m- \tilde{B} \psi_0 \fr{\ep^{1/2}}{n} \big) |\tilde{\nabla}^m \tilde{\bA}|^2
    \\
    &-\psi_{0} \fr{\ep^{1/2}}{n} |\tilde{\na}^{m+1} \tilde{\bA}|^2
     +\tilde{B}C_{m-1}|\tilde{\na}^{m-1}\tilde{\bA} |^2+C_m +\tilde{B}C_{m-1} \,
 \end{split}
\end{align}
where $\tilde{B}$ is a positive constant.  Choose $B$ sufficiently large so that $B \ge \fr{2n C_m}{\ep^{1/2} \psi_0}$, and suppose that $| \tna^{m} \bA |$ is unbounded.
Then this leads to a contradiction when one applies the maximum principle to \eqref{A_m_deriv_short}.
\end{proof}

\section{Global boundedness}\label{global_boundedness}

In this section we show that the curvature quantity defined below is uniformly bounded with the pinching estimate given.  It shall be seen in Section \ref{sec-asymptotic} that the limit manifold under the parabolic rescaling is a round sphere.
Suppose that $h_{ij} \ge \epsilon (H+c)g_{ij}$, for some $\epsilon>0$ and $c>0$, holds initially.  Then from Theorem \ref{thm-pinching}, it remains so until the maximal time $T$.  This implies that $\epsilon H^2 \le R \le H^2$ which will be used repeatedly throughout this section.
Let
$$f_{\sigma}=\frac{|A|^2-H^2/n}{H^{2-\sigma}}$$ and $f=f_{0}$.  
Note that $f_{\sigma} \le O(H^{\sigma})$. 
The aim in this section is to show that $f_{\sigma}$ is bounded for some small $\sigma$.  
With the straightforward computation
 \begin{align*}
& \frac{H^{1+\sigma}}{2R}|\nabla f + \frac{2R}{\psi H^2}\nabla \psi|^2_{|A|^2g-Hh}
\\
&= \frac{H^{1-\sigma}}{2R}|\nabla f_{\sigma}|^2_{|A|^2g-Hh} -\sigma \frac{f_{\sigma}}{H^{\sigma}R} \langle \nabla H, \nabla f_{\sigma} \rangle_{|A|^2g-Hh} +\sigma^2 \frac{f_{\sigma}^2}{2H^{1+\sigma}R}|\nabla H|^2_{|A|^2g-Hh}
\\
&+ \frac{2}{\psi H^{1-\sigma}}\langle \nabla f, \nabla \psi \rangle_{|A|^2g-Hh} +\frac{2R}{\psi^2 H^{3-\sigma}}|\nabla \psi|^2_{|A|^2g-Hh} \,  ,
\end{align*}
the time derivative of $f_{\sigma}$ follows from Lemma \ref{ev-h1}:
 \begin{align}\label{f_sigma1}
  \frac{\partial f_{\sigma}}{\partial t} &= \psi R^{-1/2} \Big( \Box f_{\sigma} -\frac{2R}{H^{5-\sigma}}|H\nabla h_{kl} -h_{kl}\nabla H|^2 -\sigma \frac{H^3 f_{\sigma}}{4R}|\nabla f + \frac{2R}{\psi H^2} \nabla \psi|^2  \nonumber
  \\
  &- \sigma \frac{f_{\sigma}}{H^3}|H \nabla_i h_{kl}-h_{kl}\nabla_i H|^2+\frac{2(1-\sigma)}{H} \langle \nabla H, \nabla f_{\sigma} \rangle_{Hg-h} +\sigma(\sigma-1)\frac{f_{\sigma}}{H^2} |\nabla H|^2_{Hg-h}  \nonumber
  \\
  &+ \frac{H^{1-\sigma}}{2R}|\nabla f_{\sigma}|^2_{|A|^2g-Hh} -\sigma \frac{f_{\sigma}}{H^{\sigma}R} \langle \nabla H, \nabla f_{\sigma} \rangle_{|A|^2g-Hh} +\sigma^2 \frac{f_{\sigma}^2}{2H^{1+\sigma}R}|\nabla H|^2_{|A|^2g-Hh} \nonumber
  \\
  &+ \sigma f_{\sigma} (H|A|^2-tr A^3) \Big) +\frac{2}{R^{1/2}H} \langle \nabla f_{\sigma}-\sigma \frac{f_{\sigma}}{H}\nabla H, \nabla \psi \rangle_{|A|^2g-Hh} 
  \\
  &- \frac{2R^{1/2}}{H^{4-\sigma}} \{ 2\langle \nabla H, \nabla \psi \rangle_{|A|^2g-Hh} -\sigma f_{\sigma} H^{2-\sigma} \langle \nabla H, \nabla \psi \rangle \}+ \sigma \frac{R^{1/2}f_{\sigma}}{\psi H}|\nabla \psi|^2 \nonumber
  \\
  &+ \frac{R^{1/2}}{H^{3-\sigma}} [\sigma(|A|^2-\fr{H^2}{n}) \Delta \psi - 2(|A|^2 g^{ij} - Hh^{ij}) \nabla_i \nabla_j \psi ] \, . \nonumber
   \end{align}
 To estimate the last line above, note that
 \begin{align*}
 \begin{split}
   |\D_i \D_j \psi |^2  &= |  (D^2 \psi)_{ij} - (D_{\bnu} \psi) h_{ij} |^2 \le 2(1+H)^2( || D^2 \psi || + |D\psi| )^2 \, ,
   \\
     | (|A|^2 g^{ij} - Hh^{ij}) \D_i \D_j \psi |^2 &\le   n|A|^2 H^{2-\sigma} f_{\sigma} |\D_i \D_j \psi |^2
   \le 
   n H^{4-\sigma} f_{\sigma} |\D_i \D_j \psi |^2 .
   \end{split}
 \end{align*}
 Thus one has
 \begin{align}\label{psi2nd}
 \begin{split}
  & \frac{R^{1/2}}{H^{3-\sigma}} [\sigma(|A|^2-\fr{H^2}{n}) \Delta \psi - 2(|A|^2 g^{ij} - Hh^{ij}) \nabla_i \nabla_j \psi ]
  \\
  &\le
R^{-1/2} H  (|D\psi| + || D^2 \psi|| )[2n^{1/2} H^{\fr{\sigma}{2}} (H+1) \fs^{1/2} + 2\sigma (H+1) \fs  ]
\end{split}
 \end{align}
 since $|A|^2-\fr{H^2}{n} = H^{2-\sigma} \fs$, $R \le H^2$ and 
  $$
 \nabla_i \nabla_j \psi = D^2 \psi \big(\fr{\pa \bX}{\pa x^i} , \fr{\pa \bX}{\pa x^j}\big) -(\bnu \cdot D\psi) h_{ij} \, .
 $$
Choose $\sigma$ sufficiently small so that
 \begin{align*}
  \sigma(\sigma-1)\frac{f_{\sigma}}{H^2} |\nabla H|^2_{Hg-h} +\sigma^2 \frac{f_{\sigma}^2}{2H^{1+\sigma}R}|\nabla H|^2_{|A|^2g-Hh} &\le 0 .
 \end{align*}
The pointwise bounds for some of the terms in \eqref{f_sigma1} can be easily obtained:
  \begin{align}\label{bound1}
 \begin{split}
 &-\frac{2R^{1/2}}{H^{4-\sigma}} \{ 2\langle \nabla H, \nabla \psi \rangle_{|A|^2g-Hh} -\sigma f_{\sigma} H^{2-\sigma} \langle \nabla H, \nabla \psi \rangle \}
\\
 & + \frac{2}{R^{1/2}H} \langle \nabla f_{\sigma}-\sigma \frac{f_{\sigma}}{H}\nabla H, \nabla \psi \rangle_{|A|^2g-Hh}
   + \sigma \frac{R^{1/2}f_{\sigma}}{\psi H}|\nabla \psi|^2
   \\
 &\le 5H^{\sigma-1} |\nabla H| |\nabla \psi|  
 +
   2\epsilon^{-1/2}(|\nabla f_{\sigma}| + \sigma H^{\sigma-1} |\nabla H| ) |\nabla \psi|
   +
 \sigma H^{\sigma} \frac{|\nabla \psi|^2}{\psi}
 \\
 &\le
10 \epsilon^{-1/2} ( |\nabla f_{\sigma}| + H^{\sigma-1}|\nabla H| ) |\nabla \psi| + \sigma H^{\sigma} \frac{|\nabla \psi|^2}{\psi} .
 \end{split}
\end{align}
Thus the terms involving the gradient of $\psi$ in \eqref{f_sigma1} are bounded above by
 \begin{align}\label{order_remainder}
  \tilde{C}(\epsilon, \sigma, \psi) (|\nabla f_{\sigma}| + H^{\sigma-1}|\nabla H| + H^{\sigma} ),
 \end{align}
since  $f_{\sigma} \le H^{\sigma}$,
where $\tilde{C}(\epsilon, \sigma, \psi) := 10 \epsilon^{-1/2}|\nabla \psi| + \sigma \frac{|\nabla \psi|^2}{\psi} $ for $\epsilon < 1$.
As computed in Lemma 2.3 (ii) in \cite{Hu1}, one can show that
 \begin{align}\label{nabla_H}
  |H\nabla_i h_{kl}-\nabla_i H \cdot h_{kl}|^2
  &\ge
  \frac{1}{2}h_{22}^2 |\nabla H|^2
  \ge
  \frac{1}{2}\epsilon^2 (H+c)^2|\nabla H|^2,
 \end{align}
where the second inequality is obtained by choosing an orthonormal frame with the first element being $\nabla H / |\nabla H|$.
From \eqref{f_sigma1},  \eqref{psi2nd}, \eqref{bound1}, \eqref{order_remainder} and \eqref{nabla_H}, one has
 \begin{align}\label{f_sigma_time_pre}
 \begin{split}
  \frac{\partial f_{\sigma}}{\partial t} &\le \psi R^{-1/2}\Big\{ \Box f_{\sigma} -\epsilon^2 \frac{R}{2H^{3-\sigma}}|\nabla H|^2
  + \frac{2(1-\sigma)}{H} \langle \nabla H, \nabla f_{\sigma} \rangle_{Hg-h}
  \\
  &+\frac{H^{1-\sigma}}{2R}|\nabla f_{\sigma}|^2_{|A|^2g-Hh} -\sigma \frac{f_{\sigma}}{H^{\sigma}R} \langle \nabla H, \nabla f_{\sigma} \rangle_{|A|^2g-Hh} + 
\sigma f_{\sigma} H^3 \Big\} 
  \\
&+ R^{-1/2} H  (|D\psi| + || D^2 \psi|| ) \Big\{  2n^{1/2} H^{\fr{\sigma}{2}} (H+1) \fs^{1/2} + 2 \sigma (H+1) \fs \Big\}
  \\
  &+ \tilde{C}(\epsilon, \sigma, \psi) (|\nabla f_{\sigma}| + H^{\sigma-1}|\nabla H| + H^{\sigma}) \, . 
   \end{split}
 \end{align}

\subsection{$L^p$ bound}\label{lp}

In order to prove that there exists a positive constant $C$ such that
 $ ||f_{\sigma}||_p \le C $
for some large $p$, we generalise the argument in Lemma 5.5 in \cite{Hu1}:  it is sufficient to show that 
 \begin{align*}
   \frac{\partial}{\partial t}\int_M f_{\sigma}^p d\mu &\le 0 .
 \end{align*}
Then, multiplying the factor $pf_{\sigma}^{p-1}$ in \eqref{f_sigma_time_pre} and integrating by parts,
it follows that
 \begin{align}\label{f_sigma_1p}
   \frac{\partial}{\partial t} \int_M f_{\sigma}^p d\mu
  &\le
  \int_M \Big[ \psi R^{-1/2}\Big\{ -p(p-1)f_{\sigma}^{p-2}|\nabla f_{\sigma}|^2_{Hg-h}
  -\frac{p}{2} R^{-1} H^{2-\sigma} f_{\sigma}^{p-1}|\nabla f_{\sigma}|^2_{Hg-h}
  \nonumber
  \\
  &+ \sigma\frac{p}{2} R^{-1} H^{1-\sigma} f_{\sigma}^p \langle \nabla H, \nabla f_{\sigma} \rangle_{Hg-h}
  + p H^{-1} f_{\sigma}^{p-1} \langle \nabla H, \nabla f_{\sigma} \rangle_{Hg-h} 
  \nonumber
  \\
  &- p \frac{\epsilon^2}{2} f_{\sigma}^{p-1} R H^{\sigma-3} |\nabla H|^2 
  + p f_{\sigma}^{p-1} \frac{2(1-\sigma)}{H} \langle \nabla H, \nabla f_{\sigma} \rangle_{Hg-h} 
  \\
  &+ p f_{\sigma}^{p-1} \frac{H^{1-\sigma}}{2R}|\nabla f_{\sigma}|^2_{|A|^2g-Hh}
  - p \frac{\sigma}{RH^{\sigma}} f_{\sigma}^{p} \langle \nabla H, \nabla f_{\sigma} \rangle_{|A|^2g-Hh}
  + p \sigma f_{\sigma}^p H^3 \Big\} 
  \nonumber
  \\
  &+ p \fs^{p-1} R^{-1/2} H  (|D\psi| + || D^2 \psi|| ) \Big\{ 2n^{1/2} H^{\fr{\sigma}{2}} (H+1) \fs^{1/2} + 2 \sigma (H+1) \fs \Big\}
  \nonumber
  \\
  &+ p \tilde{C}(\epsilon, \sigma, \psi)  f_{\sigma}^{p-1} (|\nabla f_{\sigma}| +H^{\sigma-1}|\nabla H| + H^{\sigma}) \Big] d\mu .
  \nonumber
 \end{align}
Note that it suffices to consider the case in which
 \begin{align}\label{intfs}
   \int_M f_{\sigma}^p d\mu  > \int_M d\mu \, .
 \end{align}
For $H \ge H_0 := c n \ep $ and $ 0\le k \le 2$,
\begin{align}\label{int1}
 \begin{split}
   \int H^k \fs^p &\le H_0^{k-2} \int H^2 \fs^p d\mu 
   \le
    H_0^{k-2} \int H^2 \fs^p d\mu .
       \end{split}
\end{align}
Similarly, for $ 0\le k \le 3/2$ and $p \ge 2$,
one has
\begin{align}\label{int2}
 \begin{split}
   \int H^k \fs^{p-1/2} 
   \le  \Big( \int H^{\fr{2kp}{2p-1}} \fs^{p}  d\mu \Big)^{1-\fr{1}{2p}}  \Big( \int d\mu\Big)^{1/2p}
   \le  H_0^{k-2}  \int H^2 \fs^p d\mu .
     \end{split}
\end{align}
Then from \eqref{int1}, \eqref{int2} and $R^{-1/2} H \le \ep^{-1/2}$ which follows from the pinching estimate, one has
\begin{align}\label{psideriv}
\begin{split}
&\int_M p \fs^{p-1} R^{-1/2} H  (|D\psi| + || D^2 \psi|| )[2n^{1/2} H^{\fr{\sigma}{2}} (H+1) \fs^{1/2} + 2 \sigma (H+1) \fs ] d\mu 
\\
&\le
2 p \ep^{-1/2} \Big( \sup_M  \Big( \fr{|D\psi|}{\psi} + \fr{|| D^2 \psi||}{\psi} \Big) \Big)( n H_0^{\fr{\sigma}{2}} + \sigma ) (H_0+1) H_0^{-2} 
 \int_M \psi  H^2 \fs^p d\mu .
 \end{split}
\end{align}
Since we want the integral in the right side of \eqref{psideriv} to be bounded above by
\begin{align*}
 \sigma p  \ep^{-1/2} \int_M \psi H^2 f_{\sigma}^{p} d\mu ,
\end{align*}
we take $c$ and $\sigma$ satisfying 
\begin{align}\label{c_lp}
 \sup_M  \Big( \fr{|| D\psi ||}{\psi} + \fr{|| D^2 \psi||}{\psi} \Big) \le \fr{(c n \ep)^2 \sigma}{2 [ n (c n \ep)^{\sigma/2} + \sigma ] ( c n \ep +1)}
\end{align}
which holds if one take $\sigma$ sufficiently small, say $o (\ep^{5})$, and $H_0 \ge 1$ satisfying
\begin{align}\label{c_lp2}
\fr{10}{\ep^6}  \sup_M  \Big( \fr{|D\psi|}{\psi} + \fr{|| D^2 \psi||}{\psi} \Big)  \le c .
\end{align}

For the four terms involving $\langle \nabla H, \nabla f_{\sigma} \rangle_{Hg-h}$, noting that $R \ge \epsilon H^2$, $f_{\sigma} \le H^{\sigma}$ and $|A|^2 g-Hh \le H(Hg-h)$, there exists a positive constant $C(\epsilon, \sigma)$ such that those terms are bounded by
 \begin{align}\label{nabla_H_bound}
  p C(\epsilon, \sigma) \int_M \psi \frac{f_{\sigma}^{p-1}}{H}|\nabla H| |\nabla f_{\sigma}| d\mu,
 \end{align}
where $C(\epsilon, \sigma) =O(\epsilon^{-1/2}) $. 
Similarly the three terms involving $|\nabla f_{\sigma}|^2_{Hg-h}$ in \eqref{f_sigma_1p} are bounded by
 \begin{align}\label{nabla_f_sigma1}
  -p(p-1) \int_M \psi R^{-1/2} f_{\sigma}^{p-2}|\nabla f_{\sigma}|^2_{Hg-h} d\mu.
 \end{align}
Thus, from \eqref{f_sigma_1p}, \eqref{psideriv} , \eqref{nabla_H_bound} and \eqref{nabla_f_sigma1},
 \begin{align*}
 \begin{split}
   \frac{\partial}{\partial t} \int_M f_{\sigma}^p d\mu
  &\le -p(p-1) \int_M \psi R^{-1/2} f_{\sigma}^{p-2} |\nabla f_{\sigma}|^2_{Hg-h} d\mu + 2 \sigma p \ep^{-1/2} \int_M \psi H^2 f_{\sigma}^{p} d\mu 
  \\
  &+ p C(\epsilon, \sigma) \int_M \psi \frac{f_{\sigma}^{p-1}}{H} |\nabla H| |\nabla f_{\sigma}| d\mu
   - p \frac{\epsilon^2}{2} \int_M \psi f_{\sigma}^{p-1} R^{1/2} H^{\sigma-3} |\nabla H|^2 d\mu 
  \\
  &+ p \tilde{C}(\epsilon, \sigma, \psi) \int_M f_{\sigma}^{p-1} (|\nabla f_{\sigma}| +H^{\sigma-1} |\nabla H| + H^{\sigma} ) d\mu. 
  \end{split}
 \end{align*}
 Note that \eqref{principal_curv_pinching} implies that $Hg_{ij}-h_{ij} \ge \epsilon Hg_{ij}$ and $\epsilon H^2 \le R \le H^2$.  Then by choosing $p \ge p_0$, where $p_0:=2C(\epsilon,\sigma)^2 \epsilon^{-7/2}+1$, one has
 \begin{align}\label{int_f_sigma}
&  \frac{\partial}{\partial t} \int_M f_{\sigma}^p d\mu
  \le
  -\frac{\epsilon}{2}p(p-1) \int_M \psi f_{\sigma}^{p-2}|\nabla f_{\sigma}|^2 d\mu -\frac{\epsilon^{5/2}}{4}p \int_M \psi f_{\sigma}^{p-1} H^{\sigma-2} |\nabla H|^2 d\mu
  \\
  &+ \frac{2\sigma p}{\epsilon^{1/2}} \int_M \psi H^2 f_{\sigma}^{p} d\mu + p \tilde{C}(\epsilon, \sigma, \psi) \int_M f_{\sigma}^{p-1} (|\nabla f_{\sigma}| + H^{\sigma-1}|\nabla H| + H^{\sigma} ) d\mu. \nonumber
 \end{align}
Note that using Young's inequality, one has
 \begin{align*}
   p H^{\sigma} f_{\sigma}^{p-1}  
       &\le 
       (p-1)H_0^{\sigma_1} H^2 f_{\sigma}^p +  H_0^{-\sigma p} \,  ,
  \end{align*}
where $\sigma_1 = \fr{2p(\sigma-1)+2}{p-1} < 0$. Thus, for $H_0 \ge 1$ and sufficiently large $p$, one has
 \begin{align*}
  p \int_M H^{\sigma} f_{\sigma}^{p-1}  d\mu
  &
  \le \frac{p-1}{\psi_0} H_0^{\sigma_1} \int_M \psi H^{2} f_{\sigma}^{p} d\mu + \int_M H_0^{-\sigma p} d\mu
  \le  \frac{2p}{\psi_0} H_0^{\sigma_1} \int_M \psi H^{2} f_{\sigma}^{p} d\mu \, ,
  \end{align*}
where  $\psi_0:=\inf_{\R^{n+1}} \psi$, and similarly,
 \begin{align*}
 p \int_M f_{\sigma}^{p-1} H^{\sigma-1}|\nabla H| d\mu
 &\le
  \frac{\beta p^2}{\psi_0} \int_M \psi H_0^{\sigma-2} f_{\sigma}^{p-1} |\nabla H|^2 d\mu
 + \frac{2H_0^{\sigma_1}}{\beta \psi_0} \int_M \psi H^2 f_{\sigma}^{p} d\mu, \nonumber
\\
  p \int_M f_{\sigma}^{p-1} |\nabla f_{\sigma}| d\mu
  &\le
  \frac{\beta p^2}{\psi_0} \int_M \psi f_{\sigma}^{p-2} |\nabla f_{\sigma}|^2 d\mu \nonumber
  + \frac{2 H_0^{\sigma_1}}{\beta \psi_0} \int_M \psi H^2 f_{\sigma}^{p} d\mu, \nonumber
 \end{align*}
where $\beta$ is a positive constant. 
For $H_0 \ge 1$, choose $\tilde{C}(\epsilon, \sigma, \psi)$ satisfying $\tilde{C}(\epsilon, \sigma, \psi) < \min \{ \ep^{5/2},  \sigma \ep^{-1/2} H_0^{-\sigma_1} \}$ which holds if 
$\tilde{C}(\epsilon, \sigma, \psi) < \ep^{9/2}$.
Thus, by choosing $\beta= \fr{1}{p}$, \eqref{int_f_sigma} yields
 \begin{align}\label{f_sigma_1_Lp2}
 \begin{split}
  \frac{\partial}{\partial t}\int_M f_{\sigma}^p d\mu
  &
  \le
  -\frac{\epsilon}{4}p(p-1) \int_M \psi f_{\sigma}^{p-2} |\nabla f_{\sigma}|^2 d\mu
  \\
  &
-\frac{\epsilon^{5/2}}{8} p \int_M \psi H^{\sigma-2} f_{\sigma}^{p-1} |\nabla H|^2 d\mu
  + \frac{4\sigma p}{\epsilon^{1/2}} \int_M \psi H^2 f_{\sigma}^{p} d\mu. 
  \end{split}
 \end{align}
 To eliminate the last integral above, we apply the following Michael-Simon Sobolev type inequality as given in Lemma 5.4 in \cite{Hu1}.
  \begin{lemma}\label{huisken1}
If $H>0$ and $h_{ij} \ge \epsilon (H+c)g_{ij}$ for some $\epsilon>0$ and $c>0$ initially, then we have
    \begin{align*}
      \int_M \psi f_{\sigma}^{p} H^2 d\mu
      &\le
      \frac{(2\gamma p+5)\psi_1}{n \epsilon^2 \psi_0} \int_M \psi \frac{f_{\sigma}^{p-1}}{H^{2-\sigma}} |\nabla H|^2 d\mu + \frac{(p-1) \psi_1}{n\epsilon^2 \gamma \psi_0} \int_M \psi f_{\sigma}^{p-2}|\nabla f_{\sigma}|^2 d\mu,
    \end{align*}
  for $p \ge 2$, any $\gamma>0$ and any $0 \le \sigma \le 1/2$, where $\psi_1:=\sup_{\R^{n+1}}\psi$.
  \end{lemma}
  From Lemma \ref{huisken1} and \eqref{f_sigma_1_Lp2}, one has
 \begin{align}\label{time_f_sigma_p}
  \frac{\partial}{\partial t}\int_M f_{\sigma}^p d\mu
  &
  \le
  \Big( \frac{  4\psi_{1} \sigma p}{n\gamma \epsilon^{5/2} \psi_{0}}(p-1) -\frac{\epsilon}{4}p(p-1) \Big) \int_M \psi f_{\sigma}^{p-2} |\nabla f_{\sigma}|^2 d\mu
  \\
  &+ \Big( \frac{ 4 \psi_{1} \sigma p}{n\epsilon^{5/2} \psi_{0}}(2\gamma p+5)- \frac{\epsilon^{5/2}}{8} p \Big) \int_M \psi H^{\sigma-2} f_{\sigma}^{p-1} |\nabla H|^2 d\mu. \nonumber
 \end{align}
By choosing $\gamma = \frac{8\psi_1 \sigma}{n \epsilon^{7/2}\psi_0}$ and $\sigma \le o(\ep^5)$, one concludes that  for $c$ satisfying \eqref{c_lp2},
 \begin{align}\label{p_sigma_relation}
  \frac{\partial}{\partial t}\int_M f_{\sigma}^p d\mu &\le 0 ,   \qquad \textrm{for} \quad p_0 \le p \le p_{\sigma}:=\frac{n \epsilon^{7/2} \psi_0}{32 \sigma \psi_1} \Big(\frac{n\epsilon^5}{ 32 \psi_1 \sigma}-5 \Big) .
 \end{align}
Therefore one can conclude that for $c$ satisfying \eqref{c_condition} and \eqref{c_lp2},
\begin{align}\label{f_sigma_Lp_uniform_bound}
  ||f_{\sigma}||_p &\le C   \qquad \textrm{for  $p_0 \le p \le p_{\sigma}$.}
 \end{align}

\subsection{Moser iteration}

In this subsection, we obtain the uniform bound for $\fs$ for some small $\sigma$ assuming that  $c$ in the pinching estimate in \eqref{c_condition} satisfies \eqref{c_lp2}.
Let $\eta$ be a smooth test function which will be given explicitly later.
Integrating by parts, one has
\begin{align*}
   & \int_M \eta^2 \psi R^{-1/2} \Box f_{\sigma}^p d\mu =
   \\
   &-\frac{p}{2} \int_M \eta^2 \psi R^{-3/2} H^{2-\sigma} f_{\sigma}^{p-1}|\nabla f_{\sigma}|^2_{Hg-h}
    + \sigma \frac{p}{2} \int_M \eta^2 \psi R^{-3/2} H^{1-\sigma} f_{\sigma}^p \langle \nabla H, \nabla f_{\sigma} \rangle_{Hg-h} \nonumber
    \\
   &+ p \int_M \eta^2 \psi R^{-1/2} H^{-1} f_{\sigma}^{p-1} \langle \nabla H, \nabla f_{\sigma} \rangle_{Hg-h}
    -p \int_M R^{-1/2} f_{\sigma}^{p-1} \langle \nabla (\eta^2 \psi), \nabla f_{\sigma} \rangle_{Hg-h}. \nonumber
 \end{align*}
 Note that only the last term above involves the derivative of $\eta$.
Multiplying \eqref{f_sigma_time_pre} by $p \eta^2 f_{\sigma}^{p-1}$ and proceeding as in subsection \ref{f_sigma_time_pre}, from \eqref{f_sigma_1p}, using the pinching estimate and Young's inequality, one then obtains
 \begin{align}\label{f_sigma_1_Lp2_eta}
  & \frac{\partial}{\partial t}\int_M \eta^2 f_{\sigma}^p d\mu  -2\int_M f_{\sigma}^p \eta \frac{\partial \eta}{\partial t} d\mu  
  \nonumber
  \\
  &\le -\frac{\epsilon}{4}p(p-1) \int_M  \eta^2 \psi f_{\sigma}^{p-2} |\nabla f_{\sigma}|^2 d\mu
  -\frac{\epsilon^{5/2}}{8} p \int_M \eta^2 \psi H^{\sigma-2} f_{\sigma}^{p-1} |\nabla H|^2 d\mu
  \nonumber
  \\
  &+\frac{2\sigma p}{\epsilon^{1/2}} \int_M \eta^2 \psi H^2 f_{\sigma}^{p} d\mu 
  + \fr{2}{\epsilon^{1/2}} \int_M \psi \eta |\nabla \eta| |\nabla f_{\sigma}^p| d\mu
     - \epsilon^{\frac{1}{2}} \int_M \eta ^2 \psi H^2 f_{\sigma}^p d\mu
     \\
   &+
   2p \Big( \sup_M  \Big( \fr{|D\psi|}{\psi} + \fr{|| D^2 \psi||}{\psi} \Big) \Big)  \int_M \psi \eta^2 [ \sigma (H+1) \fs^p  + 2n^{1/2} H^{\sigma/2} (H+1) \fs^{p-1/2} ] d\mu  \nonumber . 
 \end{align}
 The last integral in the third line comes from the derivative of the measure $d\mu$ and that $R \ge \epsilon H^2$.
Let $\epsilon_i$, $i=1,2,3$, be any positive numbers.  
Since
 \begin{align*} 
\eta^2 |\nabla f_{\sigma}^{\frac{p}{2}}|^2  &=  |\nabla(\eta f_{\sigma}^{\frac{p}{2}})|^2 +f_{\sigma}^p |\nabla \eta|^2-2 f_{\sigma}^{\frac{p}{2}}\langle \nabla (\eta f_{\sigma}^{\frac{p}{2}}), \nabla \eta \rangle,
 \\
2 f_{\sigma}^{\frac{p}{2}} |\langle \nabla (\eta f_{\sigma}^{\frac{p}{2}}), \nabla \eta \rangle| &\le \epsilon_1 |\nabla (\eta f_{\sigma}^{\frac{p}{2}})|^2 + \epsilon_1^{-1} f_{\sigma}^p |\nabla \eta|^2,
\\
\eta \psi |\nabla \eta| |\nabla f_{\sigma}^p|
  &\le
   2\epsilon_2 \psi |\nabla (\eta f_{\sigma}^{\frac{p}{2}})|^2 +  2\epsilon_2^{-1} \psi f_{\sigma}^p |\nabla \eta|^2 ,
 \end{align*}
   taking $\epsilon_1=\frac{1}{2}$ and $\epsilon_2=\frac{1}{32}\epsilon^{3/2}$, \eqref{f_sigma_1_Lp2_eta} becomes
 \begin{align}\label{f_sigma_1_Lp2_eta3}
 & \frac{\partial}{\partial t}\int_M \eta^2 f_{\sigma}^p d\mu   -2\int_M f_{\sigma}^p \eta \frac{\partial \eta}{\partial t} d\mu
 \nonumber
 \\
  &\le
  -\frac{\epsilon}{8} \int_M  \psi |\nabla (\eta f_{\sigma}^{\frac{p}{2}})|^2 d\mu +\frac{200}{\epsilon^2} \int_M \psi |\nabla \eta|^2 f_{\sigma}^{p} d\mu
  \nonumber
  \\
  &-\frac{\epsilon^{5/2}}{8} p \int_M \eta^2 \psi H^{\sigma-2} f_{\sigma}^{p-1} |\nabla H|^2 d\mu 
   +\frac{2\sigma p}{\epsilon^{1/2}} \int_M \eta^2 \psi H^2 f_{\sigma}^{p} d\mu
   \\
   &- \epsilon^{\frac{1}{2}} \int_M \eta ^2 \psi H^2 f_{\sigma}^p d\mu
   \nonumber
   \\
  &+ 2p \Big( \sup_M  \Big( \fr{|D\psi|}{\psi} + \fr{|| D^2 \psi||}{\psi} \Big) \Big)  \int_M \psi \eta^2 [ \sigma (H+1) \fs^p  + 2n^{1/2} H^{\sigma/2} (H+1) \fs^{p-1/2} ] d\mu 
  \nonumber
 \end{align}
With this parabolic equation, we run the Moser iteration which is also useful for extending mean curvature flow past singular time as shown in \cite{LS} and \cite{LXYZ}.
Rescale and translate time $t$ in $[T - \tilde{\delta}, T)$ by $\tilde{\delta}^{-1} (t - T + \tilde{\delta})$, for some $\tilde{\delta} > 0$, so that the rescaled time, also denoted by $t$, is in $[0, 1)$.
Let
 \begin{align*}
  D &= \cup_{0 \le t \le 1}(B(x_0,1)\cap M_t), \qquad \tilde{D} = \cup_{\frac{1}{12} \le t \le 1}(B(x_0,\frac{1}{2})\cap M_t),
 \end{align*}
 where $x_0$ is the limit point of $M_t$,
and let
 \begin{align*}
  r_k &= \frac{1}{2}+\frac{1}{2^{k+1}},
   \qquad 
   t_k=\frac{1}{12}(1-\frac{1}{4^k}),
  \qquad 
   \rho_k = r_{k-1}-r_k=\frac{1}{2^{k+1}}.
 \end{align*}
Consider the set
 \begin{align*}
  D_k &=  \cup_{t_k \le t \le 1}(B(x_0, r_k)\cap M_t).
 \end{align*}
 Note that $D_0=D$ and $t_k-t_{k-1}=\rho_k^2$.  For convenience, we write $M$ for $M_t$.
Let $\eta=\eta_k$ be the smooth test function defined on $M \times [0,1)$ by
$$
\eta_k(x,t) := v_k(|x-x_0|^2)\phi_k(t),
$$
where
 \begin{eqnarray}\label{v_def}
   v_k(s) =
   \begin{cases}
     1 & \text{for $s \le r_k^2$},
     \\
     0 & \text{for $s \ge r_{k-1}^2$},
    \end{cases}
     \end{eqnarray}
     and $v_k(s) \in [0,1]$ with $|v_k'(s)| \le c_n \rho_k^{-2}$ for $r_k^2 \le s \le r_{k-1}^2$, and
\begin{eqnarray}\label{phi_def}
   \phi_k(t) =
   \begin{cases}
     0 & \text{for $0 \le t \le t_{k-1}$},
     \\
     1 & \text{for $t_k \le t \le 1$},
    \end{cases}
    \end{eqnarray}
and $\phi_k(t) \in [0,1]$ with $|\phi_k'(t)| \le c_n \rho_k^{-2}$ for $t_{k-1} \le t \le t_k$.
From \eqref{f_sigma_1_Lp2_eta3},  in the time internal $[0, 1)$,
 \begin{align}\label{pre_moser1}
  & \tilde{\delta} \frac{\partial}{\partial t}\int_M \eta^2 f_{\sigma}^p d\mu
  +\frac{\epsilon}{8} \int_M \psi |\nabla (\eta f_{\sigma}^{\frac{p}{2}})|^2 d\mu
  +\epsilon^{\frac{1}{2}} \int_M \eta ^2 \psi H^2 f_{\sigma}^p d\mu
  \nonumber
  \\
  &\le
  \frac{200}{\epsilon^2} \int_M \psi |\nabla \eta|^2 f_{\sigma}^{p} d\mu
    +2 \tilde{\delta} \int_M f_{\sigma}^p \eta \frac{\partial \eta}{\partial t} d\mu +\frac{2\sigma p}{\epsilon^{1/2}} \int_M \eta^2 \psi H^2 f_{\sigma}^{p} d\mu  
      \\
      &+
   2p  \Big( \sup_M  \Big( \fr{|| D\psi ||}{\psi} + \fr{|| D^2 \psi||}{\psi} \Big) \Big)   \int_M \psi \eta^2 [ \sigma (H+1) \fs^p  + 2n^{1/2} H^{\sigma/2} (H+1) \fs^{p-1/2} ] d\mu
   \nonumber
 \end{align}
For $u\in W^{1,1}(M)$ and $T_1 \le T_2$, using the Sobolev inequality and the Schwarz inequality, one has
\begin{align}\label{sobolev1}
 \int_{T_1}^{T_2} \Big( \int_M u^{\frac{2n}{n-1}} d\mu \Big)^{\frac{n-1}{n}}dt &\le c(n) \Big( \sup_{[T_1,T_2)} \int_M u^2 d\mu \Big)^{\frac{1}{2}} \Big( \int_{T_1}^{T_2} \int_M (|\nabla u|^2+H^2 u^2) d\mu dt \Big)^{\frac{1}{2}},
\end{align}
and using the interpolation inequality,
\begin{align*}
 \int_M u^{\frac{2(n+1)}{n}} d\mu &\le \Big( \int_M u^2 d\mu \Big)^{\frac{1}{n}}  \Big( \int_M u^{\frac{2n}{n-1}} d\mu \Big)^{\frac{n-1}{n}},
\end{align*}
one has
\begin{align}\label{sobolev2}
  \int_{T_1}^{T_2} \int_M u^{\frac{2(n+1)}{n}} d\mu dt &\le c(n) \Big( \sup_{[T_1,T_2)} \int_M u^2 d\mu \Big)^{\frac{n+2}{2n}} \Big( \int_{T_1}^{T_2} \int_M (|\nabla u|^2+H^2 u^2) d\mu dt \Big)^{\frac{1}{2}}.
\end{align}
Integrating \eqref{pre_moser1} over $[0,1)$, we have
\begin{align}
 &\tilde{\delta}  \sup_{t \in [0,1)} \int_M \eta^2 f_{\sigma}^p d\mu
  +\frac{\psi_0 \epsilon}{8} \int_{0}^1 \int_{M} |\nabla (\eta f_{\sigma}^{\frac{p}{2}})|^2 d\mu dt
  +\epsilon^{\frac{1}{2}} \psi_0
   \int_{0}^1 \int_{M} \eta ^2 H^2 f_{\sigma}^p d\mu dt
  \\
  &\le
  \frac{200\psi_1 }{\epsilon^2} \int_{0}^1 \int_{M} |\nabla \eta|^2 f_{\sigma}^{p} d\mu dt
    + \tilde{\delta} \int_{0}^1 \int_{M} 2f_{\sigma}^p \eta \frac{\partial \eta}{\partial t} d\mu dt  \nonumber
  + \frac{2 \psi_1 \sigma p}{\epsilon^{1/2}}  \int_{0}^1 \int_{M} \eta ^2 H^2 f_{\sigma}^p d\mu dt   \nonumber
   \\
     &+
   2p C' (\psi) \int_0^1 \int_M \psi \eta^2 [ \sigma (H+1) \fs^p  + 2n^{1/2} H^{\sigma/2} (H+1) \fs^{p-1/2} ] d\mu dt ,  \nonumber
\end{align}
where $C' (\psi) = \sup_M  \Big( \fr{|| D\psi ||}{\psi} + \fr{|| D^2 \psi||}{\psi} \Big)$.
Note that one can choose $\tilde{\delta}$ small enough so that $H \ge 1$ in $[0, 1)$.  This can be achieved applying the pinching estimate in Corollary \ref{eigenvalue_pinching}.
Assuming that $\epsilon \psi_0  \le 8$ and $\tilde{\delta} \le1$, this implies
 \begin{align}\label{pre_moser2}
  &  \iint_{supp \, \eta}  ( |\nabla (\eta f_{\sigma}^{\frac{p}{2}})|^2 + \eta ^2 H^2 f_{\sigma}^p ) d\mu dt \nonumber
 \\
 &\le
 \frac{1600 \psi_1}{\epsilon^3 \psi_0} \iint_{supp \, \eta} f_{\sigma}^p ( |\nabla \eta|^2
    + 2\eta \frac{\partial \eta}{\partial t} + \sigma p \eta^2 H^2 ) d\mu dt ,
 \end{align}
 and the same bound also holds for
$
\tilde{\delta}  \sup_{[0,1)} \int_M \eta^2 f_{\sigma}^p d\mu .
$
For $H \ge 1$, denoting $f_{\sigma}^p H^2 = f_{\tilde{\sigma}}^p$ where $\tilde{\sigma}=\sigma +\frac{2}{p}$, 
one has
$f_{\sigma} \le f_{\tilde{\sigma}}$.
 Substituting $u$ by $\eta f_{\sigma}^{\frac{p}{2}}$ in \eqref{sobolev2}, one obtains
 \begin{align}\label{pre_moser3}
  \iint_{supp \, \eta}  (\eta f_{\sigma}^{\frac{p}{2}})^{\frac{2(n+1)}{n}} d\mu dt
  &\le
  c(n,\epsilon) \Big(   \iint_{supp \, \eta} f_{\sigma}^{p} ( |\nabla \eta|^2
    + 2\eta \frac{\partial \eta}{\partial t} ) +
     (1+C'')  \sigma p \eta^2 f_{\tilde{\sigma}}^p) d\mu dt
   \\
   &+
  p C'' \int_0^1 \big( \int_M \eta^2 f_{\tilde{\sigma}}^p d\mu \big)^{1-\fr{1}{2p}} dt    \nonumber
   \Big)^{\frac{n+1}{n}}
 \end{align}
 where $c(n,\epsilon) := c(n) \Big(\frac{1600 \psi_1}{\epsilon^3 \psi_0} \Big)^{\frac{n+1}{n}}$ and $C''= C'(\psi) \psi_1$.
Typically, as in \cite{E1,LS}, one has
 \begin{align}\label{eta1}
   |\nabla \eta_k|^2 + \frac{\partial}{\partial t} \eta_k^2 &\le \tilde{c}(n) \rho_k^{-2}= \tilde{c}(n) 4^k  \qquad \textrm{ on $D_{k-1}$}
 \end{align}
 and the left hand side vanishes in $M \times [0,1) \backslash D_{k-1}$, where $\tilde{c}(n)$ is a constant depending only on $n$.
 With out loss of generality, suppose that there is some point where $ f_{\tilde{\sigma}} > C$ for some constant $C > 0$ so that
 \begin{align*}
 \begin{split}
  \int_0^1 \big( \int_M \eta^2 f_{\tilde{\sigma}}^p  d\mu \big)^{1-\fr{1}{2p}} dt 
&\le 
\tilde{C}  \iint_{supp \, \eta} \eta^2 f_{\tilde{\sigma}}^p d\mu dt ,
\end{split}
  \end{align*}
for some constant $\tilde{C} > 0$.
Thus for $\tilde{\sigma} \ge \sigma$, \eqref{pre_moser3} and \eqref{eta1} yield
\begin{align}\label{pre_moser4}
  \iint_{supp \, \eta_k}  (\eta_k^2 f_{\sigma}^p)^{\frac{n+1}{n}} d\mu dt
  &\le
  \tilde{c}(n,\epsilon) \Big( \iint_{supp \, \eta_k} 4^k p f_{\tilde{\sigma}}^{p} d\mu dt
   \Big)^{\frac{n+1}{n}}
 \end{align}
where $\tilde{c}(n,\epsilon):= ( C + \sigma )  (1+C'')  c(n,\epsilon) (2\tilde{c}(n))^{\frac{n+1}{n}}$.  
Let $\lambda=\frac{n+1}{n}$, $p=\lambda^{k-1}$ and $\sigma_k = \sigma + 2\lambda^{-k+1}$.
If $I_p(t) \ge 1$, then
\eqref{pre_moser4} implies that
 \begin{align}\label{pre_moser5}
   \iint_{D_k} \eta_k^{2\lambda} f_{\sigma}^{\lambda^k} d\mu dt
 &\le
 \tilde{c}(n,\epsilon)  \Big( \iint_{D_{k-1}} 4^k \lambda^{k-1} f_{\sigma_k}^{\lambda^{k-1}} d\mu dt \Big)^{\lambda} \, ,
 \end{align}
 since $\eta_k \equiv 1$ on $D_k$ and $supp\, \eta_k \subset D_{k-1}$.
 That is,
 \begin{align}\label{pre_moser7}
  ||f_{\sigma}||_{L^{\lambda^k}(D_k)} &\le (\tilde{c}(n,\epsilon) ^{\lambda^{-1}} 4^k \lambda^{k-1})^{\lambda^{-k+1}} ||f_{\sigma_k}||_{L^{\lambda^{k-1}}(D_{k-1})} .
 \end{align}
Note $\sum_{k=1}^{\infty} k \lambda^{-k}=O(1)$, and in \eqref{p_sigma_relation}, $\sigma$ can be chosen sufficiently small so that
 $$
 \sigma-2\sum_{j=0}^{\infty} \lambda^{-k_{\sigma}+1-j} =\sigma-2(n+1)p_{\sigma}^{-1} >0 ,
 $$
 where $p_{\sigma}=\lambda^{k_{\sigma}-1}$ for some $k_{\sigma}>1$ since $p_{\sigma} =O(\epsilon^{17/2} \sigma^{-2})$ for $\sigma \le o(\ep^5)$.
   Thus from \eqref{f_sigma_Lp_uniform_bound}, one has an iteration relation:
 \begin{align}\label{pre_moser8}
  ||f_{\sigma}||_{L^{\infty}(\tilde{D})} &\le c'(n,\epsilon)||f_{\sigma_{k_{\sigma}}}||_{L^{\lambda^{k_{\sigma}-1}}(D_{k_{\sigma}-1})} \le C,
 \end{align}
for a fixed $k_{\sigma} = k_{\sigma} (\ep, \sigma)$ and some constants $c'(n,\epsilon)$
 $=\big( \prod_{k=k_{\sigma}}^{\infty} ( \tilde{c}(n,\ep)^{\lambda^{-1}} 4^k \lambda^{k-1} )^{\lambda^{-k+1}}   \big)^{-1}$ 
and $C > 0$, where the last inequality follows from \eqref{f_sigma_Lp_uniform_bound}.
Therefore,
$$
\sup_{[T-\delta',T)} \sup_{M \cap B(x_0, \frac{1}{2})} f_{\sigma} \le C,
$$
where $\delta'=\frac{\tilde{\delta}}{12}$.
From Lemma \ref{H_upperbound}, we conclude that
\begin{theorem}\label{L_infinity_bound}
If $h_{ij} \ge \epsilon(H+c)g_{ij}$ \, for some $\epsilon>0$ and $c>0$ initially satisfying \eqref{c_condition} and \eqref{c_lp2} , then one has
 \begin{align*}
   |A|^2-\frac{H^2}{n} &\le C H^{2-\tilde{\sigma}}, \qquad \textrm{ for some small $\tilde{\sigma}$}.
 \end{align*}
 \end{theorem}

\section{Proofs of main theorems}\label{sec-asymptotic}

\subsection{Proof of Theorem \ref{thm-main-1}}
From Lemma \ref{lem-iso-w} and the containment principle, one can conclude that $X(\cdot,t)$ converges to a point as $t$ tends to $T$ via the regularity theory of uniformly parabolic equations (see, for example, \cite{KS} for the regularity theory).
\qed

\subsection{Limit equation in dimension two}

The monotone quantity is a useful tool to analyze the asymptotic behavior of geometric flows.  For the mean curvature flow, a monotonicity formula using the backwards heat kernel gives a limit equation which leads to the classification of self-similar solutions \cite{Hu2}.  Here, we simply use the volume of a convex region with its boundary being $M_t$ to normalize the hypersurface.
In general, without the divergence structure for the speed $F$ depending on the curvature, it is difficult to deduce a limit equation.  However, in dimension two, this can be overcome since $R=2K$, where $K$ is the Gauss curvature, and $K$ has a quantity that is not quite monotone but enough to obtain the limit behaviour.  For this reason, we consider $X_t=-\psi (2K)^{1/2} \boldsymbol{\nu}$ which coincides with the flow \eqref{eq-main} in dimension two, and call this the anisotropic Gauss curvature flow (to be precise, (1/2)-Gauss curvature flow).

The (half) volume of a convex region with its boundary $M_t$ can be written in an integral form using the support function $S$:
\begin{equation*}
 \begin{split}
  V(t)&=\frac{1}{n+1}\int_{\mS^n}\frac{S}{2K}\,d\sigma_{\mS^n},
 \end{split}
\end{equation*}
where $d\sigma_{\mS^n}$ is the standard measure on $\mS^n$.
This is used to define a mixed volume of convex regions in \cite{A5} where it is shown that given specific speeds of evolution, some dilation invariant integral quantities monotonically decrease.
\begin{lemma}
Under the flow (\ref{evolution_eq1}), we have
  \begin{align*}
  \frac{\partial}{\partial t} V(t) &= -\int_{\mS^n} \frac{\psi}{(2K)^{1/2}} \,d\sigma_{\mS^n}.
  \end{align*}
\end{lemma}

\begin{proof}
  Denote $\cK = 1/K$.  Using integration by parts and the fact that $\overDi\cK (h^{-1})^{ij}=0$, one has
  \begin{equation*}
          \begin{split}
\int_{\mS^n}S\cK_t\,d\sigma_{\mS^n}
&=\int_{\mS^n}S\cK (h^{-1})^{ij}(\overDi\overDj S_{t}+S_t\overline{g}_{ij})\,d\sigma_{\mS^n}\\
&=\int_{\mS^n}S_{t}\cK (h^{-1})^{ij}(\overDi\overDj S+S\overline{g}_{ij})\,d\sigma_{\mS^n}
=n\int_{\mS^n}S_{t}\cK\,d\sigma_{\mS^n}.
          \end{split}
         \end{equation*}
         Then we have
   \begin{equation*}
         \begin{split}
\frac{\partial}{\partial t} V(t)&=\frac{1}{2(n+1)}\int_{S^n}(\cK S_t+S\cK_t)\,d\sigma_{\mS^n}
=-\int_{S^n} \frac{\psi}{(2K)^{1/2}} \,d\sigma_{\mS^n}.
         \end{split}
         \end{equation*}
\end{proof}
From this, one can write
$V(t)=V(0)-\int_0^t\eta(s)\,ds$ where $\eta(t):= \int_{\mS^n}\frac{\psi}{(2K)^{1/2}} \,d\sigma_{\mS^n}$.
In order to normalize the volume, rescale the hypersurface by
  \begin{align*}
    \hat{X}(\tau) &= \frac{X(t)}{V(t)^{1/(n+1)}}  \qquad \textrm{and}
    \qquad
    \tau (t) = -\log \big( \fr{V(t)}{V(0)} \big).
  \end{align*}
One can easily compute that
\begin{align}\label{eq-gcf-scaled}
 \frac{\partial \hat{X}}{\partial \tau} &= -\frac{\hat{\psi} (2\hat{K})^{1/2}}{\hat{\eta }} \hat{\bnu} + \frac{1}{n+1}\hat{X}.
\end{align}
\begin{lemma}\label{monotone}
Let
\begin{align*}
\hat{\cI}(\tau) &= \left(\int_{\mS^n} \frac{\hat{\psi}^2}{\hat{S}} \,d\sigma_{\mS^n}\right)^{-1}.
\end{align*}
In dimension two, under the volume preserving rescaling, with the initial pinching condition satisfying \eqref{c_condition1} given, one has
    $$
    \frac{d}{d \tau} \hat{\cI}(\tau) \rightarrow 0,
    $$
as $\tau \rightarrow \infty$, and the limit profile satisfies
    $
     \hat{S}^* = C \hat{\psi} (\hat{R}^*)^{1/2}
    $
    for some constant $C>0$, where $\hat{S}^{*}$ and $\hat{R}^*$ are the support function and the scalar curvature of the rescaled limit manifold $\hat{M}^*$, respectively.
\end{lemma}
\begin{proof}
From \eqref{eq-gcf-scaled}, one obtains
\begin{equation*}
\frac{\hat{\psi}^2}{\hat{S}^2} \left(\frac{\partial \hat{S}}{\partial \tau}-\frac{1}{n+1}\hat{S}\right) = -\frac{\hat{\psi}^3 (2\hat{K})^{1/2}}{\hat{\eta }\hat{S}^2} \, ,
\end{equation*}
which implies
 \begin{align*}
\frac{d}{d \tau} \hat{\cI}(\tau)
&=
\hat{\cI}(\tau)^{2} \big[ \frac{1}{n+1} \int_{\mS^n}\frac{\hat{\psi}^2}{\hat{S}}\,d\sigma_{\mS^n}
 -\fr{1}{\hat{\eta}}\int_{\mS^n}\frac{\hat{\psi}^3 (2\hat{K})^{1/2}}{ \hat{S}^2}\,d\sigma_{\mS^n}
 - 2\int \fr{\hat{\psi}}{\hat{S}} \fr{\pa \hat{\psi}}{\pa \tau} \big] \, .
 \end{align*}
Using H\"older's inequality and the definition of $\eta$ yield
\begin{align*}
& \left( \int_{\mS^n}\frac{\hat{\psi}^2}{\hat{S}}\,d\sigma_{\mS^n} \right) \left( \int_{\mS^n}  \frac{\hat{\psi}}{(2\hat{K})^{1/2}} \,d\sigma_{\mS^n} \right)^{1/2}
\\
&\le
\left( \int_{\mS^n}\frac{\hat{\psi}^3 (2\hat{K})^{1/2}}{\hat{S}^{2}}\,d\sigma_{\mS^n} \right)^{1/2}  \left( \int_{\mS^n} \frac{\hat{S}}{2\hat{K}} \, d\sigma_{\mS^n} \right)^{1/2}
\left( \int_{\mS^n} \frac{\hat{\psi}^2}{\hat{S}} \, d\sigma_{\mS^n}  \right)^{1/2}.
\end{align*}
The fact that $\hat{V}(\tau) = \frac{1}{n+1}\int_{\mS^n} \frac{\hat{S}}{2\hat{K}} \, d\sigma_{\mS^n} = 1$ implies
\begin{align*}
\left( \int_{\mS^n}\frac{\hat{\psi}^2}{\hat{S}}\,d\sigma_{\mS^n} \right) \left( \int_{\mS^n}  \frac{\hat{\psi}}{(2\hat{K})^{1/2}} \,d\sigma_{\mS^n} \right)
&\le
(n+1) \int_{\mS^n}\frac{\hat{\psi}^3(2\hat{K})^{1/2}}{\hat{S}^2}\,d\sigma_{\mS^n} \, ,
\end{align*}
where the equality holds if and only if
 $
  \hat{S} = C \hat{\psi} \hat{K}^{1/2}
 $
 for some constant $C>0$, and therefore,
  \begin{align*}
    \frac{d}{d \tau} \hat{\cI}(\tau) &\le -2 \hat{\cI}(\tau)^2 \hat{\eta}\int_{\mS^n} \fr{\hat{\psi}}{\hat{S}} \fr{\pa \hat{\psi}}{\pa \tau} d\sigma_{\mS^n} \, .
  \end{align*}
Since $D\psi \rightarrow 0$ as $\tau \rightarrow \infty$ and, in dimension two, the pinching estimate controls $\hat{K}$ and $\hat{S}$, one has $\fr{\pa \hat{\psi}}{\pa \tau} \rightarrow 0$ as $\tau \rightarrow \infty$.  Also $\hat{V}(\tau)=1$ implies that
 \begin{align*}
  \int_{\mS^2} \fr{\hat{\psi}}{\hat{S}} d\sigma_{\mS^2} \le C,
 \end{align*}
for some positive constant $C$, and hence,
 \begin{align*}
   \lim_{\tau \rightarrow \infty} \frac{d}{d \tau} \hat{\cI}(\tau) &= 0 \, ,
  \end{align*}
so that the limit profile satisfies $\hat{S}^* = C \hat{\psi} (\hat{R}^*)^{1/2} $
 for some constant $C>0$, where $\hat{S}^{*}$ and $\hat{R}^*$ are the support function and the scalar curvature of $\hat{M}^*$, respectively. 
  \end{proof}

\subsection{Proof of Theorem \ref{thm-main-2}}

Parametrizing the rescaled hypersurface as a graph by
  \begin{align}\label{rescaled_X_graph}
     \tilde{X}(x,t) &= \tilde{r}(z,t) z,
  \end{align}
      where $x=\pi^{-1}(z)$, $z \in S^n$ and $\pi: M^n \rightarrow S^n$ is the normalizing map,
Lemma \ref{lem-iso-esti} (i) and the convexity guarantee the uniform boundedness of the first derivative of $\tilde{r}$ in the rescaled version of \eqref{r_evolution}. Then the regularity theory of uniformly parabolic equation provide the boundedness of the higher derivatives of $\tilde{r}$.  Thus, recalling Lemma \ref{highA}, each time slice $\tilde{X}(\cdot, \tau_{k})$ has a $C^{\infty}$-convergent subsequence to a smooth  strictly convex limit hypersurface $\tilde{M}^*$.  In dimension two, the limit hypersurface $\hat{M}^*$ of the volume preserving anisotropic scalar curvature flow satisfies the equation $\hat{S}^* = C \hat{\psi} (\hat{R}^*)^{1/2}$ for some $C>0$ by Lemma \ref{monotone}.

Suppose that $h_{ij} \ge \epsilon(H+c)g_{ij}$ initially.   We follow the argument in Sect.7 in \cite{C2}.
Since there is a point $p_0$ in $\tilde{M}^*$ with $\tilde{K}(p_0)>0$,
there is an open neighborhood $\tilde{U}$ containing $p_0$ with $\tilde{K}>0$ in $\tilde{U}$.
However the unnormalized $H$ blows up in the open neighborhood $U$ corresponding to $\tilde{U}$ and then
from Theorem \ref{L_infinity_bound} and the scale invariance of $f$, we have $f=0$ in $U$ which implies that
$\tilde{U}$ is totally umbilical. Thus $\tilde{K}$ is constant in $\tilde{M}^*$ so that $\tilde{M}^*$ is a round sphere $S^n$.
\qed

\vspace{0.5cm}

\textbf{References}

\end{document}